\title{\textbf{On the sum of character codegrees of finite groups}}
\author[1]{Mark L. Lewis}
\author[1]{Quanfu Yan\footnote{Corresponding author.}}
\affil[1]{Department of Mathematical Sciences, Kent State University, Kent, OH 44242, USA}
{
    \makeatletter
    \renewcommand\AB@affilsepx{: \protect\Affilfont}
    \makeatother

    \affil[ ]{\par Email addresses}
    
    \makeatletter
    \renewcommand\AB@affilsepx{, \protect\Affilfont}
    \makeatother

   \affil[ ]{\href{mailto:lewis@math.kent.edu}{lewis@math.kent.edu}}
    \affil[ ]{\href{mailto:qyan5@kent.edu}{qyan5@kent.edu}}

}
\renewcommand{\Affilfont}{\small\it}
\date{}
\newtheorem{theorem}{Theorem}[section]
\newtheorem{proposition}[theorem]{Proposition}
\newtheorem{lemma}[theorem]{Lemma}
\newtheorem{corollary}[theorem]{Corollary}
\theoremstyle{definition}
\newtheorem*{question*}{Question A}
\let\expandafter\oldproof\csname\string\proof\endcsname
\let\oldendproof\endproof
\renewenvironment{proof}[1][\proofname]{%
  \oldproof[\bfseries\scshape #1]%
}{\oldendproof}
\def\trianglelefteqslant{\ThisStyle{\mathrel{%
  \stackinset{r}{.75pt+.15\LMpt}{t}{.1\LMpt}{\rule{.3pt}{1.1\LMex+.2ex}}{\SavedStyle\leqslant}%
}}}
\renewcommand{\unlhd}{\trianglelefteqslant}
\renewcommand{\leq}{\leqslant}
\renewcommand{\geq}{\geqslant}
\begin{document}
\maketitle
\begin{abstract}
\noindent\textbf{Abstract.} Let $\chi$ be an irreducible character of a group $G.$ We denote the sum of the codegrees of the irreducible characters of $G$ by $S_c(G)=\sum_{\chi\in {\rm Irr}(G)}{\rm cod}(\chi).$ We consider the question if $S_c(G)\leq S_c(C_n)$ is true for any finite group $G,$ where $n=|G|$ and $C_n$ is a cyclic group of order $n.$  We show this inequality holds for many classes of groups. In particular, we provide an affirmative answer for any finite group whose order is divisible by up to 99 primes. However, we show that the question does not hold true in all cases, by evidence of a counterexample.

\medskip

\noindent{\bf Keywords:} Irreducible character codegrees, Codegree sum.\\
 \noindent{\bf MSC:} 20C15
\end{abstract}
\section{Introduction}

All groups considered in this paper are finite. Let $G$ be a finite group and $C_n$ be a cyclic group of order $n$. We write ${\rm Irr}(G)$ to denote the set of complex
irreducible characters of $G$. For
$\chi\in {\rm Irr}(G),$ define the number $${\rm cod}(\chi)=\frac{|G:\ker\chi|}{\chi(1)}$$ to be the {\textbf{codegree}} of $\chi.$ This definition of codegree was first introduced by Qian, Wang and Wei in \cite{Qian1}. To see a few of the papers where codegree has been studied, consult \cite{Isaacs2011,Qian2021,Yang2017,Du2016}. 

Recently, a number of papers have considered the relationship between the sum of degrees (denoted by $T(G)$) of the irreducible characters of a group $G$ and the structure of that group. For example, it was proved in \cite{ILM2013} by Isaacs, Loukaki and Moreto that if $T(G)\leq 3k(G)$, $T(G)<\frac{3}{2}k(G)$ or $T(G)< \frac{4}{3}k(G),$ then $G$ is respectively solvable, supersolvable or nilpotent, where $k(G)$ denotes the number of conjugacy classes of $G.$  Motivated by these papers, we consider the sum of the codegrees and want to see what can be sold regarding the sum.
In this paper, we denote the sum of codegrees of irreducible characters of $G$ by $$S_{c}(G)=\sum_{\chi\in {\rm Irr}(G)}{\rm cod}(\chi).$$ 

For an abelian group $G$ of order $n$, it is well known that the set ${\rm Irr}(G)$ forms an abelian group and $G\cong {\rm Irr}(G)$ and the codegree of $\chi\in{\rm Irr}(G)$ is the order of $\chi$ in the group ${\rm Irr}(G).$ It follows that $S_{c}(G)$ is equal to the sum of element orders of $G.$ In 2009, Amiri, Jafarian Amiri and Isaacs proved in \cite{AAIsaacs} that for any noncyclic group $H$, the sum of element orders of $H$ is strictly less than the sum of element orders of the cyclic group of order $|H|$. Hence, it follows that when $G$ is an abelian group of order $n$, then $S_{c}(G)\leq S_{c}(C_n)$ with equality if and only if $G$ is cyclic. 

Inspired by the idea of the result in \cite{AAIsaacs} and the above observation, we consider the following question:

\begin{question*}
Let $G$ be a finite group of order $n$. Is it true that $S_c(G)\leq S_c(C_n),$ where $C_{n}$ is a cyclic group of order $n ?$  
\end{question*}
    
We know the question is true for abelian groups based on the result of Amiri, Jafarian Amiri and Isaacs in \cite{AAIsaacs}. Therefore, the next natural class of groups to look at is nilpotent groups. Perhaps not surprisingly, we can answer the questions for nilpotent groups. 

\begin{theorem}\label{thm1.1}
If $G$ is a nilpotent group of order $n$, then $S_{c}(G)\leq S_{c}(C_{n})$ with equality if and only if $G$ is cyclic.
\end{theorem}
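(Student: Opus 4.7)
The plan is to reduce the theorem to the $p$-group case via multiplicativity of $S_c$ across coprime direct products, and then handle $p$-groups by an elementary codegree bound. Since $G$ is nilpotent, $G = P_1 \times \cdots \times P_k$ is the direct product of its Sylow subgroups, with $|P_i| = p_i^{a_i}$. The first step is to show that $S_c$ factors across coprime direct products: if $\gcd(|A|,|B|) = 1$ and $\alpha \otimes \beta \in \mathrm{Irr}(A \times B)$, then $(a,b) \in \ker(\alpha \otimes \beta)$ forces $\rho_\alpha(a) = \zeta I$ and $\rho_\beta(b) = \zeta^{-1} I$ for some scalar $\zeta$; since $\zeta^{|A|} = \zeta^{|B|} = 1$ and $\gcd(|A|,|B|) = 1$, one gets $\zeta = 1$. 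Hence $\ker(\alpha \otimes \beta) = \ker\alpha \times \ker\beta$, $\mathrm{cod}(\alpha \otimes \beta) = \mathrm{cod}(\alpha)\mathrm{cod}(\beta)$, and summing yields $S_c(A \times B) = S_c(A)S_c(B)$. Iterating, $S_c(G) = \prod_i S_c(P_i)$ and $S_c(C_n) = \prod_i S_c(C_{p_i^{a_i}})$, so the theorem reduces to proving $S_c(P) \leq S_c(C_{p^a})$ for each $p$-group $P$ of order $p^a$, with equality iff $P$ is cyclic.

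For a non-cyclic $p$-group $P$ of order $p^a$ (necessarily $a \geq 2$), I claim $\mathrm{cod}(\chi) \leq p^{a-1}$ for every $\chi \in \mathrm{Irr}(P)$. A linear $\chi$ cannot be faithful, as a faithful linear character would embed $P$ into $\mathbb{C}^\times$ and force $P$ cyclic; hence $\mathrm{cod}(\chi) = |P:\ker\chi| \leq p^{a-1}$. For nonlinear $\chi$, the degree $\chi(1)$ is a $p$-power exceeding $1$, so $\chi(1) \geq p$ and $\mathrm{cod}(\chi) \leq |P|/\chi(1) \leq p^{a-1}$. Combining this with $k(P) \leq |P| = p^a$ and the fact that the trivial character contributes exactly $1$, we get $S_c(P) \leq 1 + (p^a - 1) p^{a-1}$. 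Comparing with $S_c(C_{p^a}) = 1 + \sum_{i=1}^a \phi(p^i) p^i = 1 + \sum_{i=1}^a (p^{2i} - p^{2i-1})$, the desired strict inequality rearranges to the positivity of
\[ \sum_{i=1}^{a-1}(p^{2i} - p^{2i-1}) + p^{a-1} + p^{2a-1}(p - 2), \]
which is evident for $p \geq 2$ and $a \geq 2$.

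Finally, strict inequality in each non-cyclic Sylow factor propagates to $G$ by the multiplicativity established in the first step, so $S_c(G) = S_c(C_n)$ iff each $P_i$ is cyclic iff $G$ is cyclic. The main technical checkpoint is the coprime multiplicativity of codegrees; beyond that, the argument is elementary, and the only delicate point is verifying the polynomial inequality above in the case $p = 2$, where the term $p^{2a-1}(p-2)$ vanishes and positivity rests entirely on the remaining terms.
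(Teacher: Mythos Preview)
Your proof is correct, and the reduction to $p$-groups via coprime multiplicativity of $S_c$ matches the paper's Lemma~\ref{lem2.1}. Where you diverge is in the $p$-group step. The paper treats the abelian case by invoking the Amiri--Jafarian Amiri--Isaacs theorem on sums of element orders, and for nonabelian $p$-groups it splits $S_c(G)$ into the linear part $S_c(G/G')$ and the nonlinear part, bounding the latter via the class-number estimate $k(G) < \frac{p+1}{p^2}|G|$ (Lemma~\ref{lem2.3}). Your argument is more uniform and more elementary: you observe directly that in any non-cyclic $p$-group of order $p^a$, every irreducible character has codegree at most $p^{a-1}$ (no faithful linear character; nonlinear degree at least $p$), and then use only the trivial bound $k(P) \leq |P|$. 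This single estimate handles the abelian non-cyclic and nonabelian cases simultaneously, and the resulting polynomial inequality is straightforward even at $p=2$. So your route avoids both the external citation and Lemma~\ref{lem2.3}; the trade-off is that the paper's sharper class-number bound yields a tighter upper estimate on $S_c(G)$ in the nonabelian case, which could be useful for quantitative refinements, whereas your bound is just enough to conclude.
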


For metacyclic groups that are semidirect products of coprime orders and nonabelian, we get a stronger result as follows.

\begin{theorem}\label{thm1.2}
 Let $G=C_n\rtimes C_m$ be a semidirect product of two cyclic groups $C_n$ and $C_m,$ where $(n,m)=1$ and $C_m$ is not normal in $G.$ Then $S_{c}(G)< \frac{8}{21}S_{c}(C_{nm}).$
\end{theorem}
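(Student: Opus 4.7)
The plan is to use Clifford theory to parametrize the irreducible characters of $G$ and obtain an exact formula for the codegrees. Setting $N = C_n$ and $H = C_m$, the hypothesis that $H$ is not normal in $G$ is equivalent to the conjugation action of $H$ on $N$ being nontrivial. Since $(n,m)=1$ and $N$ is abelian, ${\rm Irr}(G)$ is in bijection (up to $H$-conjugacy of $\lambda$) with pairs $(\lambda,\mu)$ where $\lambda \in {\rm Irr}(N)$ and $\mu \in {\rm Irr}(H_\lambda)$, with $H_\lambda$ the inertia subgroup; the corresponding character is $\chi = (\lambda\hat\mu)^G$ of degree $s_\lambda = [H:H_\lambda]$.

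The first step will be to prove the codegree formula
$$
{\rm cod}\bigl((\lambda\hat\mu)^G\bigr) = {\rm ord}(\lambda)\cdot{\rm ord}(\mu).
$$
Because $N$ is cyclic, $\ker\lambda$ is characteristic, hence $H$-invariant. The induced character satisfies $\chi(nh) = 0$ for $h \notin H_\lambda$ and $\chi(nh) = \mu(h)\sum_{t \in H/H_\lambda}\lambda(n^t)$ for $h \in H_\lambda$. I will show $\ker\chi = \ker\lambda \cdot \ker\mu$: the inclusion $\supseteq$ is direct, and for $\subseteq$ one first reduces to $h \in H_\lambda$ with $n$ in the preimage of $(N/\ker\lambda)^H$ (the case where $|\sum_t \lambda(n^t)| = s_\lambda$); then the condition $\lambda(n)\mu(h) = 1$ combined with $\lambda(n)$ and $\mu(h)$ lying in cyclic subgroups of $\mathbb{C}^\times$ of orders dividing $n$ and $m$ respectively, together with $(n,m)=1$, forces $\lambda(n) = \mu(h) = 1$. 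An index count gives the formula, so
$$
S_c(G) = \sum_\mathcal{O} d_\mathcal{O}\,\psi(m/s_\mathcal{O}),
$$
where the sum runs over $H$-orbits on ${\rm Irr}(N)$, $d_\mathcal{O}$ is the common order of $\lambda \in \mathcal{O}$, and $\psi(k) = \sum_{d\mid k} d\phi(d)$ is the sum of element orders in $C_k$ (so $S_c(C_{nm}) = \psi(n)\psi(m)$).

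Separating the $H$-fixed orbits (contributing $\psi(n_0)\psi(m)$ with $n_0 = |(C_n)^H|$) from the moved orbits (for which $s_\mathcal{O} \geq 2$), I will use two facts: first, the elementary inequality $\psi(m/s) < \psi(m)/s$ for $s \mid m$, $s > 1$, proved via $\psi(m) \geq \sum_{d \mid m,\, s \mid d} d\phi(d) \geq s\,\psi(m/s)$ with strict inequality since the term $d = 1$ occurs in $\psi(m)$ but not in the subsum; and second, $\sum_{\text{mov}} d_\mathcal{O} s_\mathcal{O} = \psi(n) - \psi(n_0)$ together with $d_\mathcal{O}/s_\mathcal{O} \leq d_\mathcal{O} s_\mathcal{O}/4$ (from $s_\mathcal{O} \geq 2$). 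These combine to
$$
S_c(G) < \psi(n_0)\psi(m) + \tfrac{1}{4}\psi(m)\bigl(\psi(n) - \psi(n_0)\bigr) = \tfrac{\psi(m)}{4}\bigl(\psi(n) + 3\psi(n_0)\bigr).
$$

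Finally I will bound $\psi(n_0)/\psi(n)$. Coprimality of $n$ and $m$ forces $H$ to act trivially on the $2$-part of $N$ (since $|{\rm Aut}(C_{2^k})|$ is a power of $2$ while $2 \nmid m$), and on each odd prime-power part $C_{p^{f_p}}$ the image of $H$ in ${\rm Aut}(C_{p^{f_p}})$ has order coprime to $p$, hence fixes only the identity when nontrivial. Consequently $n/n_0$ is a product of odd prime powers, and nontriviality of the action forces at least one factor, so $n/n_0 \geq 3$; therefore $\psi(n_0)/\psi(n) = 1/\psi(n/n_0) \leq 1/\psi(3) = 1/7$. Inserting this,
$$
\frac{S_c(G)}{S_c(C_{nm})} < \frac{\psi(n) + 3\psi(n_0)}{4\psi(n)} = \frac{1}{4} + \frac{3\psi(n_0)}{4\psi(n)} \leq \frac{1}{4} + \frac{3}{28} = \frac{5}{14} < \frac{8}{21}.
$$
The main obstacle lies in the codegree formula: one must correctly identify $\ker\chi$, noting that the preimage of $(N/\ker\lambda)^H$ in $N$ may be strictly larger than $\ker\lambda$, and it is precisely $(n,m)=1$ that brings the kernel back down to $\ker\lambda\cdot\ker\mu$.
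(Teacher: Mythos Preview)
Your argument is correct and in fact yields the sharper constant $\tfrac{5}{14}<\tfrac{8}{21}$. Both proofs proceed via Clifford theory over $N=C_n\unlhd G$, but the routes diverge in two places. First, the paper proves only the inequality ${\rm cod}((\beta\phi)^G)\le d\cdot{\rm cod}\phi$, whereas you establish the exact identity ${\rm cod}((\lambda\hat\mu)^G)={\rm ord}(\lambda)\cdot{\rm ord}(\mu)$ by pinning down $\ker\chi=\ker\lambda\cdot\ker\mu$; this lets you write $S_c(G)=\sum_{\mathcal O}d_{\mathcal O}\,\psi(m/s_{\mathcal O})$ in closed form. Second, the paper first reduces inductively to the case $C_A(H)=1$ via Fitting's decomposition $A=C_A(H)\times[A,H]$ and then bounds $S_c(T_d/A)\le S_c(H)/S_c(C_q)$ using the smallest prime $q\mid m$, obtaining $\tfrac{1}{p(p-1)+1}+\tfrac{1}{q(q(q-1)+1)}\le\tfrac{1}{3}+\tfrac{1}{21}=\tfrac{8}{21}$; you instead keep the fixed part $\psi(n_0)\psi(m)$ separate, use the divisor-sum inequality $\psi(m/s)<\psi(m)/s$ together with $s_{\mathcal O}\ge 2$ on moved orbits, and then exploit the observation (absent from the paper) that coprimality forces the $H$-action on any $2$-part of $N$ to be trivial, so $n/n_0$ is odd and at least $3$, giving $\psi(n_0)/\psi(n)\le 1/\psi(3)=1/7$. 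The payoff of your exact codegree formula and the parity observation is the improved bound; the paper's approach has the advantage of needing only the easier kernel containment rather than the full kernel computation.
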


In fact, if for the largest prime, say $p$, dividing the order of $G,$ a Sylow $p$-subgroup of $G$ is either not normal or nonabelian, we can get the result.

\begin{theorem}\label{thm1.3}
Let $G$ be a group of order $n$ and $P$ a Sylow $p$-subgroup of $G$, where $p$ is the largest prime divisor of the order of  $G.$ If either $P$ is not normal in $G$ or $P$ is nonabelian, then $S_{c}(G)\leq S_{c}(C_{n}).$    
\end{theorem}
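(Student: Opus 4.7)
The plan is to handle the two cases of the hypothesis separately and combine the earlier theorems of this section with an induction on $|G|$. A useful starting observation is that for any $\chi\in\mathrm{Irr}(G)$ with $G$ noncyclic, $\mathrm{cod}(\chi)\leq n/q$ where $q$ is the smallest prime divisor of $n$: if $\ker\chi\neq 1$ then $[G:\ker\chi]\leq n/q$, and if $\chi$ is faithful then since a noncyclic group has no faithful linear character we have $\chi(1)>1$, and $\chi(1)\mid n$ forces $\chi(1)\geq q$. This bound alone is too weak, since $k(G)$ can be as large as $n$, but it becomes useful together with finer structural information.

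For Case~1 ($P$ not normal), Sylow's theorem yields $n_p\geq p+1$; since $p$ is the largest prime, $m:=n/|P|$ has all prime divisors strictly less than $p$, and $m\geq p+1$. The key identity
\[
S_c(G)=\sum_{N\trianglelefteq G}F(G/N),\qquad F(K):=\sum_{\chi\in\mathrm{Irr}(K)\text{ faithful}}\mathrm{cod}(\chi),
\]
reduces the desired inequality $S_c(G)\leq S_c(C_n)=\sum_{d\mid n}d\phi(d)$ to comparing faithful-codegree sums quotient-by-quotient. I would bound $F(G/N)$ using that faithful characters of a noncyclic group have degree at least its smallest prime divisor, together with the multiplicativity $\psi(n)=\psi(|P|)\psi(m)$ and the large factor $\psi(|P|)\geq p^{2a-1}(p-1)$; quotients whose Sylow $p$-subgroup is normal and abelian should fall into the situation of Theorem~\ref{thm1.2} or Theorem~\ref{thm1.1}.

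For Case~2 ($P\trianglelefteq G$ nonabelian), Schur--Zassenhaus gives $G=P\rtimes H$ with $|H|=m$ coprime to $|P|$. Via Clifford theory over $P$, every $\chi\in\mathrm{Irr}(G)$ is induced from the inertia group of some $\theta\in\mathrm{Irr}(P)$, and the codegree of $\chi$ factors in a controlled way in terms of $\mathrm{cod}(\theta)$ and a codegree coming from the inertia quotient inside $H$. The aim is to establish
\[
S_c(G)\leq S_c(P)\cdot S_c(H)<\psi(|P|)\,\psi(m)=\psi(n),
\]
where the strict inequality on the $P$-factor comes from Theorem~\ref{thm1.1} applied to the nonabelian $p$-group $P$, and $S_c(H)\leq\psi(m)$ follows from induction on $|G|$ (or a direct application of Theorem~\ref{thm1.1} if $H$ turns out to be nilpotent).

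The main obstacle is Case~1: while the quotient-by-quotient faithful-sum approach is conceptually clean, the actual estimates must leverage the Sylow condition $n_p\geq p+1$ precisely enough to beat $\psi(n)$, in particular when $|P|$ is small so the slack from $\psi(|P|)$ is modest. A secondary subtlety, in Case~2, is the Clifford-theoretic decomposition of codegrees of induced characters: although $S_c(G)\leq S_c(P)\cdot S_c(H)$ is the natural target, codegrees under induction depend on both the inertia index and the extending character, and verifying the inequality rigorously requires careful bookkeeping.
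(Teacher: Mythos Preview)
Your approach is substantially more elaborate than the paper's and contains genuine gaps. The paper's proof is a short averaging argument: assuming $S_c(G)>S_c(C_n)$, divide by $k(G)$ and use Lemma~\ref{lem2.2}(2) to get $\frac{S_c(G)}{k(G)}>\frac{p_1}{p+1}\cdot\frac{n}{k(G)}\cdot n$; then Lemmas~\ref{lem2.3} and~\ref{lem2.4} give $\frac{n}{k(G)}>\frac{p^2}{p+1}$ under either hypothesis, and since $\frac{p_1p^2}{(p+1)^2}\geq\frac{2\cdot 9}{16}>1$ one obtains $\frac{S_c(G)}{k(G)}>n$, contradicting the trivial bound ${\rm cod}(\chi)\leq n$. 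No induction, no Clifford theory, no case split on normality of $P$ beyond invoking the appropriate $k(G)$ bound.

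Your Case~2 has two concrete problems. First, the step ``$S_c(H)\leq\psi(m)$ by induction on $|G|$'' is invalid: the induction hypothesis is Theorem~\ref{thm1.3}, but nothing forces the Sylow subgroup of $H$ for its largest prime to be non-normal or nonabelian, and you cannot induct on Question~A itself because Question~A is false (Theorem~\ref{thm1.5}). Second, the target inequality $S_c(P\rtimes H)\leq S_c(P)\cdot S_c(H)$ for coprime semidirect products is unproven and genuinely delicate: its analogue with $P$ an abelian $p$-group already fails, e.g.\ for the Frobenius group $G=C_{23}^2\rtimes C_3$ one computes $S_c(G)=\tfrac{1}{3}(23^4-23^2+21)=93111>85015=7(23^3-22)=S_c(C_{23}^2)\cdot S_c(C_3)$, so any Clifford-theoretic argument would have to exploit the nonabelianness of $P$ in an essential way, which your sketch does not indicate how to do. Your Case~1 is only a plan, as you yourself flag: the decomposition $S_c(G)=\sum_{N\unlhd G}F(G/N)$ is correct, but no estimate is actually carried out, and the quotients $G/N$ may well have normal abelian top Sylow subgroup, so the hoped-for reduction to Theorems~\ref{thm1.1} and~\ref{thm1.2} does not close.
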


Next we discuss how the number of prime divisor of the order of $G$ affects the sum of codegrees. Amazingly, we can show that for groups whose order is divisible by up to $t=99$ distinct primes, the question has a positive answer. 

\begin{theorem}\label{thm1.4}
Let $G$ be a group of order $n$ with $|\pi(G)|=t\leq 99,$ where $\pi(G)$ denotes the set of all primes dividing the order of $G.$ Then $S_{c}(G)\leq S_{c}(C_{n}).$      
\end{theorem}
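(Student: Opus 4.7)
The plan is to induct on $t = |\pi(G)|$. The base case $t \le 1$ follows from Theorem~\ref{thm1.1}, since $G$ is then a $p$-group and hence nilpotent. For the inductive step, let $p$ denote the largest prime dividing $n$ and $P \in \mathrm{Syl}_p(G)$. By Theorem~\ref{thm1.3} we may assume $P$ is both normal in $G$ and abelian, as otherwise the conclusion is immediate. Schur--Zassenhaus then supplies a Hall $p'$-complement $H$ with $|H| = n/|P|$ and $|\pi(H)| = t-1$, so the inductive hypothesis yields $S_c(H) \le S_c(C_{|H|})$. Since $S_c(C_n) = S_c(C_{|P|})\, S_c(C_{|H|})$ by the multiplicativity of $S_c$ on coprime cyclic direct factors, it suffices to prove $S_c(G) \le S_c(C_{|P|})\cdot S_c(H)$.

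The next step is to analyze $\mathrm{Irr}(G)$ via Clifford theory with respect to the normal abelian subgroup $P$. Characters containing $P$ in their kernel correspond bijectively to $\mathrm{Irr}(G/P) = \mathrm{Irr}(H)$ and contribute exactly $S_c(H)$ to $S_c(G)$. For each nontrivial $H$-orbit on $\mathrm{Irr}(P)$ with representative $\lambda$ and stabilizer $H_\lambda$, the coprimeness of $|P|$ and $|H|$ allows $\lambda$ to extend to $I_G(\lambda) = PH_\lambda$, and Gallagher's theorem parametrizes $\mathrm{Irr}(G\mid\lambda)$ by $\mathrm{Irr}(H_\lambda)$. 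I would combine the Clifford formula $\chi(1) = [H:H_\lambda]\beta(1)$ with the restriction identity $\ker\chi \cap P = \bigcap_{h\in H}\ker\lambda^h$ (read off from $\chi|_P = \beta(1)\sum_{h}\lambda^h$) to compute each codegree explicitly, and then sum over orbits to obtain a bound of the form $S_c(G) - S_c(H) \le F(P, H)$, where $F(P,H)$ is organized over orbit data on $\mathrm{Irr}(P) \setminus \{1_P\}$.

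The remaining step is the numerical inequality $F(P, H) \le (S_c(C_{|P|}) - 1)\, S_c(H)$. I would expand both sides as products over $\pi(G)$, control the $P$-factor via the abelian case of Theorem~\ref{thm1.1} (cf.~\cite{AAIsaacs}), and compare factor-by-factor across the primes. The principal obstacle is this final primorial-style inequality: each prime dividing $n$ contributes a small slack that compounds multiplicatively, and the hypothesis $t \le 99$ is precisely what keeps the cumulative slack nonnegative; beyond $99$ distinct primes the product estimate fails, in line with the counterexample alluded to in the introduction.
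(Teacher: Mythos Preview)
Your inductive reduction to the inequality $S_c(G)\le S_c(C_{|P|})\cdot S_c(H)$ is the heart of your plan, but this inequality is \emph{false} already for groups with $|\pi(G)|=4$, so the induction cannot proceed. Take the Frobenius group $G=(C_2^2\times C_5^2\times C_{11}^2)\rtimes C_3$ from the family in Section~4 (here $p=11$, $P=C_{11}^2$, $H=(C_2^2\times C_5^2)\rtimes C_3$). Using the formula derived there, $S_c(G)=\tfrac{20}{3}+\tfrac{1}{3}\prod_i(p_i^4-p_i^2+1)=37\,817\,531$, while $S_c(C_{121})\cdot S_c(H)=13\,421\cdot 2\,611=35\,042\,231$. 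So $S_c(G)>S_c(C_{|P|})\cdot S_c(H)$, even though $S_c(G)\le S_c(C_n)$ still holds comfortably. The point is that the slack in Theorem~\ref{thm1.4} lives in the comparison with $S_c(C_n)$ itself, not in a factorized comparison through $H$; peeling off one Sylow and inducting throws that slack away. Your final paragraph also proposes to ``expand both sides as products over $\pi(G)$,'' but $S_c(H)$ for a nonabelian $H$ has no such product form, so that plan cannot be carried out as stated.

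The paper's argument avoids this trap by not inducting through $H$ at all. After the same reduction via Theorems~\ref{thm1.1} and \ref{thm1.3} to the case where the top Sylow $P$ is abelian and normal, it works directly with $G$: writing $d=\min\{\chi(1):\chi(1)>1\}$, It\^o's theorem gives $d\mid |G:P|$, hence $d\ge p_1\ge 2$. The linear part is $S_c(G/G')$, and (after disposing of the degenerate case $|G'|=p_1$, where $G=P\times H$ and induction on $|G|$ applies) one has $|G'|\ge 3$, so $S_c(G/G')\le \tfrac{1}{7}S_c(C_n)$. The nonlinear part is at most $(k(G)-|G:G'|)\cdot n/d\le n^2/d^3\le n^2/8$, and Lemma~\ref{lem2.2}(2) converts $n^2$ into $\bigl(\prod_i\tfrac{p_i+1}{p_i}\bigr)S_c(C_n)$. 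The hypothesis $\prod_i\tfrac{p_i}{p_i+1}\ge\tfrac{7}{48}$ (verified for the first $99$ primes) then closes the estimate. The numerical condition enters only once, in comparing $n^2/8$ to $S_c(C_n)$, rather than being distributed across an induction.
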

 
With the above results, one is led to believe that the question will have a positive answer for all finite groups. However, by utilizing the properties of the Euler product and the Riemann zeta function, we construct a family of examples  indicating the question will not always have a positive answer. Thus, we have the following.

\begin{theorem}\label{thm1.5}
There is an integer $n$ and a group $G$ of order $n$ so that $S_{c}(G)> S_{c}(C_{n}).$      
\end{theorem}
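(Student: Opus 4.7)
The plan is to exhibit an explicit pair $(G, n)$ with $n=|G|$ and $S_c(G)>S_c(C_n)$. By Theorem~\ref{thm1.4}, $n$ must have at least $100$ distinct prime divisors, so the construction is necessarily asymptotic in nature and relies on the slow divergence of $\sum 1/p$.

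The analytic input begins with the Euler product
\[
\frac{S_c(C_n)}{n^2}=\prod_{p^a\|n}\frac{p^{2a+1}+1}{p^{2a}(p+1)},
\]
which, for squarefree $n=p_1\cdots p_k$, collapses to $\prod_{i=1}^{k}(1-p_i^{-1}+p_i^{-2})$. Writing $1-1/p+1/p^2=(1+1/p^3)/(1+1/p)$ and invoking the Euler factorisation $\prod_p(1+p^{-s})=\zeta(s)/\zeta(2s)$, together with the divergence of $\prod_p(1+1/p)$, yields
\[
\prod_{p}\left(1-\tfrac1p+\tfrac1{p^2}\right)=\frac{\zeta(3)/\zeta(6)}{\prod_p(1+1/p)}=0.
\]
Quantitatively, by Mertens' theorem $\prod_{p\le x}(1-1/p+1/p^2)=O(1/\log x)$, so $S_c(C_n)/n^2$ can be driven arbitrarily close to zero by choosing $n$ to be a product of sufficiently many distinct primes.

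Next, I would produce a nonnilpotent group $G$ of such a squarefree order $n$ whose ratio $S_c(G)/n^2$ remains bounded below by a positive constant independent of $|\pi(n)|$. A natural candidate is a semidirect product $G=M\rtimes H$ of coprime orders with a carefully chosen $H$-action on $M$; using Clifford theory we inventory the irreducible characters as (i) those inflated from $G/M\cong H$, which contribute the cyclic sum $\psi(H)$ when $H$ is cyclic, and (ii) those induced from non-trivial $H$-orbits on $\mathrm{Irr}(M)$, each of which is faithful with codegree equal to $|M|$ times the orbit size. By arranging the orbit structure so that many induced characters have codegree $\asymp |M|$ and there are $\asymp n/|M|$ of them, the total $S_c(G)$ can be kept of order $c\cdot n^2$ for an absolute constant $c>0$.

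Combining the two ingredients: once $|\pi(n)|$ is large enough that the Euler-product estimate forces $S_c(C_n)/n^2<c$, the constructed $G$ satisfies $S_c(G)>S_c(C_n)$, giving the required counterexample. The main obstacle is the construction step: we must arrange a non-nilpotent structure so that the codegree sum grows like $n^2$ up to an absolute constant, rather than decaying logarithmically as in the cyclic case. The Euler-product analysis pins down the precise threshold at which such a candidate overtakes $C_n$, consistent with the $99$-prime barrier of Theorem~\ref{thm1.4}.
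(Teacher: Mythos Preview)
Your analytic setup for $S_c(C_n)$ is fine, but the construction step has a fatal gap: if $n$ is squarefree then every group of order $n$ has all Sylow subgroups cyclic and is therefore metacyclic, i.e.\ of the form $C_a\rtimes C_b$ with $(a,b)=1$. Theorem~\ref{thm1.2} of this very paper then forces $S_c(G)<\tfrac{8}{21}S_c(C_n)$ for every nonabelian such $G$, so no squarefree $n$ can ever yield a counterexample. The underlying reason is exactly the point you gloss over: when $M$ is cyclic, each $\ker\lambda$ is characteristic in $M$, hence normal in $G$, so $\ker(\lambda^G)=\ker\lambda$ and $\mathrm{cod}(\lambda^G)=o(\lambda)$ rather than anything close to $|M|$. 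Your assertion that the induced characters are ``faithful with codegree equal to $|M|$ times the orbit size'' is therefore unjustified and in fact false in the squarefree setting. (Even your own bookkeeping is off: $n/|M|=|H|$ characters of codegree $\asymp|M|$ contribute $\asymp n$, not $cn^2$.)

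The paper's construction avoids this obstruction by abandoning squarefreeness: it takes $A=\prod_{i=1}^t C_{p_i}^2$ with the $p_i$ the first $t$ primes $\equiv 2\pmod 3$, and lets $P=C_3$ act fixed-point-freely on each two-dimensional factor (possible since $3\mid p_i^2-1$ but $3\nmid p_i-1$). Because $C_{p_i}^2$ is not cyclic, $\ker\lambda$ is no longer normal in $G$; whenever $p_i\mid o(\lambda)$ one gets $|\ker\lambda^G|_{p_i}=1$, so $\mathrm{cod}(\lambda^G)$ picks up a full factor $p_i^2$ rather than $p_i$. A direct computation then gives
\[
S_c(G)=\tfrac{20}{3}+\tfrac13\prod_{i=1}^t\frac{p_i^6+1}{p_i^2+1},\qquad
S_c(C_n)=7\prod_{i=1}^t\frac{p_i^5+1}{p_i+1},
\]
and the ratio is governed by $\prod_{p\equiv 2\,(3)}\frac{(p^6+1)(p+1)}{(p^2+1)(p^5+1)}$, whose factors behave like $1+1/p$; divergence then follows from Dirichlet's theorem on $\sum_{p\equiv 2\,(3)}1/p$. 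The non-squarefree choice of $A$ is not a technicality but the whole mechanism.
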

 
In Section 2, we present some basic properties of $S_c(G)$. In section 3, we discuss about several classes of groups where the question can be answered positively. We will present the counterexamples in Section 4. 

The work in this paper was completed by the second author (P.h.D student) under the supervision of the first author at Kent State University. The contents of this paper may appear as part of the second author's P.h.D dissertation.

\section{Preliminaries}

In this section, we present some results from the literature and prove some preliminary lemmas. When working with irreducible characters, one needs to understand the conjugacy classes of the group. In particular, we here focus on the number of conjugacy classes and give the following lemmas.

\begin{lemma}\label{lem2.3}
Let $G$ be a non-abelian $p$-group and $k(G)$ be the number of conjugacy classes of $G$.
Then $k(G)<\frac{p+1}{p^{2}}|G|$.
\end{lemma}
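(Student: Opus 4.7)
The plan is to bound $k(G)$ by splitting the elements of $G$ into central and non-central ones, using the standard two ingredients available for non-abelian $p$-groups.

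First I would observe that because $G$ is non-abelian, the quotient $G/Z(G)$ cannot be cyclic (otherwise $G$ itself would be abelian), and since it is a $p$-group this forces $|G/Z(G)| \ge p^2$, i.e.\ $|Z(G)| \le |G|/p^2$. Next, each element of $Z(G)$ is its own conjugacy class, while any non-central element $x$ has centralizer $C_G(x)$ a proper subgroup of $G$; since the index is a power of $p$, the class of $x$ has size at least $p$. Therefore the number of non-central classes is at most $(|G|-|Z(G)|)/p$, and altogether
\begin{equation*}
k(G) \;\le\; |Z(G)| + \frac{|G|-|Z(G)|}{p} \;=\; \frac{|G|}{p} + \frac{p-1}{p}\,|Z(G)|.
\end{equation*}

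Then I would substitute the bound $|Z(G)| \le |G|/p^{2}$ to obtain
\begin{equation*}
k(G) \;\le\; \frac{|G|}{p} + \frac{p-1}{p}\cdot\frac{|G|}{p^{2}} \;=\; \frac{p^{2}+p-1}{p^{3}}\,|G|.
\end{equation*}
Since $p^{2}+p-1 < p^{2}+p$, the right-hand side is strictly less than $\frac{p^{2}+p}{p^{3}}|G| = \frac{p+1}{p^{2}}|G|$, which is exactly the desired bound.

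There is essentially no serious obstacle here: the only two facts one needs are the elementary observation that $G/Z(G)$ cannot be nontrivially cyclic and the fact that non-central conjugacy classes in a $p$-group have size divisible by $p$. The only mildly delicate point is making sure the final inequality is strict, but this is automatic from the gap between $p^2+p-1$ and $p^2+p$ in the numerators.
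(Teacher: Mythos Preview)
Your proof is correct and uses essentially the same two ingredients as the paper: the bound $|Z(G)|\le |G|/p^{2}$ from the non-cyclicity of $G/Z(G)$, and the fact that every non-central class has size at least $p$. The paper packages these via the class equation in the form $|G|\ge |Z(G)|+(k(G)-|Z(G)|)\,|G|/m$ with $m=\max_{x\notin Z(G)}|C_G(x)|\le |G|/p$, obtaining strictness by discarding the positive term $m|Z(G)|/|G|$; you instead carry the term $(|G|-|Z(G)|)/p$ through to the sharper intermediate bound $k(G)\le \frac{p^{2}+p-1}{p^{3}}|G|$ and get strictness at the very end from $p^{2}+p-1<p^{2}+p$. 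These are minor rearrangements of the same argument.
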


\begin{proof}
Let $m$ be the maximal order of ${\rm C}_{G}(x)$ for $x\in G-Z(G)$.
By the class equation for $G$, we have that
$$|G|\geq |Z(G)|+(k(G)-|Z(G)|)\frac{|G|}{m}$$
and then
$$k(G)\leq m+|Z(G)|-\frac{m|Z(G)|}{|G|}
<m+|Z(G)|=|G|(\frac{m}{|G|}+\frac{|Z(G)|}{|G|}).$$
Observe that $|G/Z(G)|\geq p^{2}$ and $\frac{m}{|G|}\leq \frac{1}{p}$ as $G$ is nonabelian.
It follows that $k(G)<\frac{p+1}{p^{2}}|G|$.
\end{proof}

We also need the following observations of Guralnick and Robinson from \cite{Guralnic2006}.

\begin{lemma}\label{lem2.4}{\rm (See \cite[Lemma 2]{Guralnic2006})} Let $G$ be a group and $k(G)$ be the number of conjugacy classes of $G$.

{\rm (1)} For every proper subgroup $H$ of $G,$ we have $\frac{k(G)}{|G|}\leq \frac{k(H)}{|H|}.$

{\rm (2)} Let $P\in {\rm Syl}_p(G),$ where $p\in\pi(G).$ If $P$ is not normal in $G,$ then $\frac{k(G)}{|G|}\leq \frac{1}{p}.$
\end{lemma}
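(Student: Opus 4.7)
My plan for part (1) is to use Burnside's orbit-counting lemma twice. Let $H$ act on $G$ by conjugation; every $G$-conjugacy class in $G$ is a union of $H$-orbits on $G$, so $k(G) \leq k_H(G)$, where $k_H(G)$ denotes the number of $H$-orbits. Burnside applied to this action gives $k_H(G) \cdot |H| = \sum_{h \in H} |C_G(h)|$. The key inequality is $|C_G(h)| \leq [G:H] \cdot |C_H(h)|$ for $h \in H$, which follows from $C_H(h) = C_G(h) \cap H$ together with the fact that distinct cosets of $C_G(h) \cap H$ inside $C_G(h)$ map injectively to cosets of $H$ in $G$. Combining with $\sum_{h \in H} |C_H(h)| = |H| \cdot k(H)$ (Burnside applied to $H$) gives
\[
k(G) \cdot |H| \;\leq\; k_H(G) \cdot |H| \;\leq\; [G:H] \cdot |H| \cdot k(H),
\]
which rearranges to $k(G)/|G| \leq k(H)/|H|$.

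For part (2), my plan is to induct on $|G|$, combining part (1) with the Gallagher-type inequality $k(G) \leq k(N) \cdot k(G/N)$ for $N \trianglelefteq G$. First observe that $N_G(P)$ is a proper subgroup with $[G:N_G(P)] \geq p+1$ by Sylow's theorem. If $O_p(G) \neq 1$, set $N = O_p(G)$: by the correspondence theorem, $P/N$ is still a non-normal Sylow $p$-subgroup of $G/N$, so induction gives $k(G/N) \leq |G/N|/p$, and with the trivial bound $k(N) \leq |N|$ we obtain $k(G) \leq |G|/p$. This reduces the problem to the base case $O_p(G) = 1$.

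The main obstacle is exactly this base case, where $\bigcap_{Q \in \mathrm{Syl}_p(G)} Q = 1$ and the easy reduction fails. Here I would attempt a direct counting argument using the transitive permutation action of $G$ on $\mathrm{Syl}_p(G)$ (of degree $\geq p+1$), in tandem with applying part (1) to the proper subgroup $N_G(P)$. Splitting $k(G) \cdot |G| = \sum_{g \in G} |C_G(g)|$ according to whether $g$ is $p$-singular or $p$-regular, and bounding each contribution via Sylow counting and fixed-point formulas on $\mathrm{Syl}_p(G)$, should extract the sharp factor $1/p$. The delicate step is relating the $p$-regular conjugacy classes back to a subgroup where part (1) applies with index divisible by $p$; this is where I expect the argument to require the most care, and where the Sylow congruence $|\mathrm{Syl}_p(G)| \equiv 1 \pmod p$ must be used in an essential way rather than merely to bound the index.
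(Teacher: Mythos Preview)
The paper does not prove this lemma; it merely cites it as \cite[Lemma~2]{Guralnic2006}, so there is no in-paper argument to compare against and your proof must stand on its own.

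Your proof of part~(1) is correct and is the standard one. The key inequality $|C_G(h)| \le [G:H]\,|C_H(h)|$ is exactly $|C_G(h):C_G(h)\cap H| \le [G:H]$, and the two applications of the Cauchy--Frobenius lemma do the rest.

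Part~(2), however, has a genuine gap. The reduction via $N = O_p(G)$ and the inequality $k(G) \le k(N)\,k(G/N)$ is valid when $O_p(G) \neq 1$: as you note, $P/N$ is then a non-normal Sylow $p$-subgroup of the smaller group $G/N$, and induction together with $k(N)\le |N|$ finishes. But what you call the ``base case'' $O_p(G) = 1$ is not actually proved: you offer only a plan (``should extract the sharp factor $1/p$'') and yourself flag the crucial step as unfinished. Worse, the reduction buys almost nothing, because the case $O_p(G)=1$ is essentially as hard as the original statement; nothing concrete in your sketch exploits $\bigcap_{Q}Q = 1$. Applying part~(1) to $H = N_G(P)$ gives only $k(G)/|G| \le k(N_G(P))/|N_G(P)|$, and since $P \trianglelefteq N_G(P)$ you cannot feed $N_G(P)$ back into the inductive hypothesis; the Sylow congruence $[G:N_G(P)] \ge p+1$ by itself does not produce the factor $1/p$ on the right-hand side. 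A complete argument needs a genuinely different idea at this point, and that idea is absent from your proposal.
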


Now we prove the following lemma, which plays an important role while considering nilpotent groups. 

\begin{lemma}\label{lem2.1}
Let $G=H\times K$ be a direct product of $H$ and $K$, then $S_{c}(G)\leq S_{c}(H)\cdot S_{c}(K)$. Furthermore, the equality holds if $(|H|, |K|)=1$.
\end{lemma}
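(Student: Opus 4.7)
The plan is to parametrize $\mathrm{Irr}(G)$ via the standard bijection $\mathrm{Irr}(H\times K)=\{\chi\times\psi:\chi\in\mathrm{Irr}(H),\psi\in\mathrm{Irr}(K)\}$, where $(\chi\times\psi)(h,k)=\chi(h)\psi(k)$, and then relate the codegree of $\chi\times\psi$ to the codegrees of $\chi$ and $\psi$. The degree factorizes trivially as $(\chi\times\psi)(1)=\chi(1)\psi(1)$, so everything will reduce to understanding $\ker(\chi\times\psi)$ and comparing it with $\ker\chi\times\ker\psi$.

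The central step is the kernel inequality $\ker(\chi\times\psi)\supseteq \ker\chi\times\ker\psi$, from which I immediately get $|G:\ker(\chi\times\psi)|\leq|H:\ker\chi|\cdot|K:\ker\psi|$ and hence $\mathrm{cod}(\chi\times\psi)\leq\mathrm{cod}(\chi)\,\mathrm{cod}(\psi)$. One containment is obvious. For the other direction (which I will need only in the coprime case), I will use the fact that $(h,k)\in\ker(\chi\times\psi)$ means $\chi(h)\psi(k)=\chi(1)\psi(1)$; applying $|\chi(h)|\leq\chi(1)$ and $|\psi(k)|\leq\psi(1)$ forces $|\chi(h)|=\chi(1)$ and $|\psi(k)|=\psi(1)$, i.e.\ $h$ and $k$ lie in the respective quasi-kernels (centers of the characters). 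Writing $\chi(h)=\chi(1)\lambda$ and $\psi(k)=\psi(1)\mu$ with $\lambda,\mu$ roots of unity satisfying $\lambda\mu=1$, I observe that $\lambda$ is an $|h|$-th root of unity and $\mu$ is a $|k|$-th root of unity, so $\lambda=\mu^{-1}$ has order dividing $\gcd(|h|,|k|)$, which in turn divides $\gcd(|H|,|K|)$.

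Summing over all pairs now yields
\[
S_c(G)=\sum_{\chi\in\mathrm{Irr}(H)}\sum_{\psi\in\mathrm{Irr}(K)}\mathrm{cod}(\chi\times\psi)\leq\sum_{\chi,\psi}\mathrm{cod}(\chi)\,\mathrm{cod}(\psi)=S_c(H)\cdot S_c(K),
\]
which proves the first assertion. When $(|H|,|K|)=1$ the divisibility argument above forces $\lambda=\mu=1$, so $h\in\ker\chi$ and $k\in\ker\psi$; the kernel containment becomes an equality, each $\mathrm{cod}(\chi\times\psi)=\mathrm{cod}(\chi)\mathrm{cod}(\psi)$, and summing gives $S_c(G)=S_c(H)S_c(K)$.

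The main obstacle is the kernel identification, specifically the root-of-unity bookkeeping that turns a product equation $\lambda\mu=1$ into simultaneous triviality under the coprime hypothesis; every other step is essentially a bookkeeping consequence of the tensor-product structure of $\mathrm{Irr}(H\times K)$.
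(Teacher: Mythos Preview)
Your proposal is correct and follows essentially the same route as the paper: both use the parametrization $\mathrm{Irr}(H\times K)=\{\chi\times\psi\}$, the obvious containment $\ker\chi\times\ker\psi\subseteq\ker(\chi\times\psi)$ for the inequality, and, in the coprime case, the observation that $(h,k)\in\ker(\chi\times\psi)$ forces $h\in Z(\chi)$, $k\in Z(\psi)$ with the associated linear values inverse roots of unity whose common order must divide $\gcd(|H|,|K|)$. Your divisibility phrasing (``order of $\lambda$ divides $\gcd(|h|,|k|)$'') is in fact a shade more precise than the paper's formulation, but the argument is the same.
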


\begin{proof}

 Let $\chi\in {\rm Irr}(H)$ and $\psi\in {\rm Irr}(K)$, then 
$\ker\chi\times \ker\psi\leq \ker(\chi\times \psi)$ and so ${\rm cod}(\chi\times \psi)\leq {\rm cod}(\chi)\cdot {\rm cod}(\psi).$ By {{\cite[Theorem 4.21]{Isaacs1976}}}, the characters $\chi\times \psi$ for $\chi\in {\rm Irr}(H)$ and $\psi\in {\rm Irr}(K)$ are exactly the irreducible characters of $G$. Hence $S_{c}(G)\leq S_{c}(H)\cdot S_{c}(K).$ 

Now assume  $(|H|, |K|)=1$. Let $g=hk\in \ker(\chi\times \psi)$ for some $h\in H$ and $k\in K.$ Then
$$\chi\times \psi(hk)=\chi(h)\psi(k)=\chi(1)\psi(1).$$
It follows that $h\in {Z}(\chi)$ and $k\in {Z}(\psi)$.
Let $\chi_{{Z}(\chi)}=\chi(1)\lambda$ and $\psi_{ Z(\psi)}=\psi(1)\mu$,
where $\lambda$ and $\mu$ are linear characters of $ Z(\chi)$ and $Z(\psi)$, respectively.
So $\lambda(h)=\mu(k)^{-1}$. Let $o(g)$ denote its order for an element $g\in G.$ If either $\lambda(h)\neq 1$ or $\mu(k)\neq 1$, then $o(h)=o(k)$, a contradiction with $(|H|, |K|)=1$.
It follows that $\lambda(h)=\mu(k)=1$ and therefore $h\in \ker{\chi}$ and $k\in \ker\psi$. Hence, $\ker\chi\times \ker\psi=\ker(\chi\times \psi)$, which gives that
$${\rm cod}(\chi\times \psi)=\frac{|H\times K: \ker(\chi\times \psi)|}{\chi\times \psi(1)}
={\rm cod}(\chi)\cdot {\rm cod}(\psi).$$
Thus $S_{c}(G)=S_{c}(H)\cdot S_{c}(K)$.
\end{proof}

The hypothesis $(|H|, |K|)=1$ cannot be dropped from the second statement of Lemma \ref{lem2.1}. For example, $S_{c}(C_{2}\times C_{2})=7$ and $ S_{c}(C_{2})\cdot S_{c}(C_{2})=9>7$. We next compute the sum of codegrees of cyclic groups.

\begin{lemma}\label{lem2.2}
{\rm (1)}  If $G$ is isomorphic to a cyclic group $C_{p^{n}}$ of order $p^{n}$,
where $p$ is a prime and $n$ is a positive integer,
then $S_{c}(G)=\frac{p^{2n+1}+1}{p+1}$.

{\rm (2)} If $G$ is a cyclic group of order $m$,
where $m=p_{1}^{s_{1}}p_{2}^{s_{2}}\cdots p_{t}^{s_{t}}$ and $p_{1}<p_{2}<\cdots<p_{t}=p$,
then $S_{c}(G)=\prod_{i=1}^{t}\frac{p_{i}^{2s_{i}+1}+1}{p_{i}+1}>\prod_{i=1}^{t}\frac{p_i}{p_i+1}m^2\geq \frac{p_{1}}{p+1}m^{2}$.

{\rm (3)} Let $p$ be a prime and $G$ be a $p$-group.
Then $S_{c}(G)\equiv 1\bmod p$.

{\rm (4)} Let $p$ be a prime and $a,b$ be positive integers. Then $S_c(C_{p^a})\cdot S_c(C_{p^{b}})\leq S_c(C_{p^{a+b}}).$

\end{lemma}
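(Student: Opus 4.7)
The plan is to prove the four parts of Lemma \ref{lem2.2} in turn, each by a direct calculation that builds on the observation already used in the introduction: for any cyclic group $C_n$ the codegree of an irreducible character equals its order, so $S_c(C_n)$ is simply the sum of element orders of $C_n$.

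For part (1), I would count elements of $C_{p^n}$ by order --- there are $\phi(p^i)=p^i-p^{i-1}$ elements of order $p^i$ for $1\le i\le n$, plus the identity --- and evaluate the resulting geometric series:
\[
S_c(C_{p^n})=1+\sum_{i=1}^{n}p^i(p^i-p^{i-1})=1+\frac{p^{2n+1}-p}{p+1}=\frac{p^{2n+1}+1}{p+1}.
\]

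Part (2) is where the one nontrivial estimate lives. The product formula for $S_c(C_m)$ follows immediately from part (1) together with the coprime case of Lemma \ref{lem2.1} applied to the Sylow decomposition of $C_m$. The first strict inequality $S_c(C_m)>\prod_i\frac{p_i}{p_i+1}\cdot m^2$ comes term-by-term from dropping the $+1$ in each numerator, since $\prod_i p_i^{2s_i+1}=m^2\prod_i p_i$. For the second inequality I would telescope: since $p_{i+1}\ge p_i+1$ for primes in increasing order, $\frac{p_i}{p_i+1}\ge\frac{p_i}{p_{i+1}}$, and these estimates collapse to give
\[
\prod_{i=2}^{t}\frac{p_i}{p_i+1}\ge\frac{p_2}{p_3}\cdot\frac{p_3}{p_4}\cdots\frac{p_{t-1}}{p_t}\cdot\frac{p_t}{p_t+1}=\frac{p_2}{p+1}\ge\frac{p_1+1}{p+1},
\]
and multiplying by $\frac{p_1}{p_1+1}$ yields the desired bound. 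This telescope is the main (and essentially only) hurdle in the whole lemma.

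Part (3) is a one-line observation: in a $p$-group both $|G:\ker\chi|$ and $\chi(1)$ are $p$-powers, so every codegree is a power of $p$; the standard bound $|G:\ker\chi|\ge\chi(1)^2$ forces ${\rm cod}(\chi)\ge\chi(1)\ge 1$, with equality only for the trivial character. Hence every nontrivial term is divisible by $p$, giving $S_c(G)\equiv1\pmod p$. For part (4), substituting part (1) reduces the inequality to
\[
(p^{2a+2b+1}+1)(p+1)-(p^{2a+1}+1)(p^{2b+1}+1)\ge 0,
\]
and direct expansion collapses the left side to $p(p^{2a}-1)(p^{2b}-1)$, which is nonnegative and strictly positive whenever $a,b\ge 1$.
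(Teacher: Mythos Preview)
Your proof is correct and follows essentially the same route as the paper's. The only differences are cosmetic: for (1) and the inequalities in (2) you supply explicit computations where the paper merely cites a reference or asserts the bound, and in (3) you use the elementary general inequality $\chi(1)^2\le|G:\ker\chi|$ (together with the fact that codegrees in a $p$-group are $p$-powers) in place of the paper's appeal to the Gagola--Lewis divisibility $\chi(1)^2\mid|G:\ker\chi|$ for nilpotent groups---a slightly more self-contained variant of the same one-line argument.
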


\begin{proof}
(1) See \cite[Lemma 2.9]{HLM}.

(2) It follows by Lemma \ref{lem2.1}.
Furthermore, turning $m=p^{n}$ in (2), we will have (1).

(3) Since $G$ is a nilpotent group,
 it follows by \cite{GagolaLewis} that $\chi(1)^{2}$ divides $|G: \ker\chi|$ for every $\chi\in {\rm Irr}(G)$.
Also, ${\rm cod}(\chi)=1$ if and only if $\chi=1_{G}$.
Therefore, $p\mid {\rm cod}(\chi)$ if $\chi\neq 1_{G}$ and we have $S_{c}(G)\equiv 1\bmod p$.

(4) We only need to show that $$\frac{p^{2a+1}+1}{p+1}\cdot\frac{p^{2b+1}+1}{p+1}\leq \frac{p^{2(a+b)+1}+1}{p+1},$$ which is equivalent to $p^{2a}+p^{2b}\leq p^{2a}p^{2b}+1.$ Clearly, $\frac{1}{p^{2a}}+\frac{1}{p^{2b}}\leq 1,$ which implies (4). 
\end{proof}

\section{Groups that satisfy $S_c(G)\leq S_{c}(C_{n})$}

We begin with the proof  of Theorem \ref{thm1.1}, which proves that $S_c(G)\leq S_{c}(C_{n})$ for a nilpotent group $G.$

\begin{proof}[Proof of Theorem \ref{thm1.1}]
In light of Lemma \ref{lem2.1}, it suffices to prove this result for $p$-groups. We are done if $G$ is abelian. Now suppose that $G$ is nonabelian and $|G|=p^n$. Then we have that the derived subgroup $G'$ of $G$ is nontrivial
and so $|G: G'|=p^{a}$ for some integer $1\leq a<n$.
If $\chi\in {\rm Irr}(G)$ is nonlinear,
then $\chi(1)\geq p$ and so ${\rm cod}(\chi)\leq p^{n-1}$.
Now, it follows from Lemma \ref{lem2.3} that
\begin{eqnarray*}
S_{c}(G)&=&\sum_{\chi\in {\rm Lin}(G)}{\rm cod}(\chi)
+\sum_{\chi\in {\rm Irr}(G)-{\rm Lin}(G)}{\rm cod}(\chi)\\
&\leq& S_{c}(G/G')+\sum_{\chi\in {\rm Irr}(G)-{\rm Lin}(G)}p^{n-1}\\
&\leq& S_{c}(C_{p^{a}})+(k(G)-|G: G'|)p^{n-1} \\
&<&\frac{p^{2a+1}+1}{p+1}+(\frac{p+1}{p^{2}}p^{n}-p^{a})p^{n-1}.
\end{eqnarray*}
Notice that $n-a\geq 1$ and so $1-(p+1)p^{n-a-2}\leq 1-(p+1)/p<0$.
Also, $\frac{(p+1)^{2}}{p^{4}}\leq 1$. It follows that
$$p^{2a+1}(1-(p+1)p^{n-a-2})+\frac{(p+1)^{2}}{p^{4}}p^{2n+1}\leq p^{2n+1}$$
and so
$$S_{c}(G)<\frac{p^{2a+1}+1}{p+1}+(\frac{p+1}{p^{2}}p^{n}-p^{a})p^{n-1}\leq
\frac{p^{2n+1}+1}{p+1}=S_{c}(C_{p^{n}}).$$
The proof is complete.
\end{proof}

We now consider coprime nonabelian semidirect products of two cyclic groups. We see that we get a stronger bound in this case. We prove Theorem \ref{thm1.2} as follows.

\begin{proof}[Proof of Theorem \ref{thm1.2}]
Let $A=C_n$, $H=C_m$ and $N=[A,H].$ Notice that $N\leq A$ and $[A/N,HN/N]=[A,H]/N=1.$ It follows that $G/N=A/N\times HN/N$ is abelian, which indicates that $G'\leq N.$ Clearly, $N=[A,H]\leq [G,G]=G'.$ Thus, $G'=[A,H].$  By Fitting's Theorem (see \cite[Theorem 4.34]{Isaacs2008}), we have that $A=C_A(H)\times [A,H]=C_A(H)\times G'.$ Since $C_A(H)\unlhd G,$ the group $G$ can be written as $G=C_A(H)\times (G'\rtimes H).$ By Lemma \ref{lem2.1} $S_c(G)\leq S_c(C_A(H))\cdot S_c(G'\rtimes H).$ Clearly, $1<G'\leq A.$ If $G'<A,$ then $C_A(H)>1$ and $S_c(G'\rtimes H)< \frac{8}{21}S_c(G')\cdot S_c(H)$ by induction. It follows from Lemma \ref{lem2.2} that $S_c(G)<\frac{8}{21} S_c(C_A(H))\cdot S_c(G')\cdot S_c(H) \leq \frac{8}{21}S_c(C_{nm}).$ 

Now we assume that $G'=A$. Then $C_A(H)=1.$ Since $A\cong {\rm Irr}(A),$ we may assume that $A=\langle \lambda \rangle,$ where $\lambda$ is an irreducible character of $A$ and $G$ acts on $A$ by conjugation. 
Notice that $A=\bigcup_{d|n} A_{d}$ is a disjoint union of $A_d$, where $A_d=\{a\in A\ |\ o(a)=d\}$ and $o(a)$ denotes the order of $a.$ It is easy to see that $A_d=\{(\lambda^{\frac{n}{d}})^l\ |\ (l,d)=1, 1\leq l\leq d \}$ and $|A_d|=\psi(d),$ where $\psi(d)$ is the Euler's totient function. Since conjugate elements have the some order, every $G$-orbit on $A$ is contained in some $A_d$ for some $d\ |\ n.$ Now we claim that all $G$-orbits contained in $A_d$ for a fixed $d$ have the same size. Let $I_G(\lambda)$ be the stablilizer of $\lambda$ in the action of $G$ on $A$. Then the size of the orbit containing $\lambda$ is $|G:I_G(\lambda)|.$ Thus we only need to show that $I_G(\lambda^{\frac{n}{d}})=I_G((\lambda^{\frac{n}{d}})^l)$ for any integers $l$ satisfying $ (l,d)=1, 1\leq l\leq d.$ Let $h\in I_G((\lambda^{\frac{n}{d}})^l).$ As $G=A\rtimes H$ and $A$ is cyclic, we may assume that $h$ lies in $H\cap I_G((\lambda^{\frac{n}{d}})^l)$. It follows that $[(\lambda^{\frac{n}{d}})^l]^h=(\lambda^{\frac{n}{d}})^l=[(\lambda^{\frac{n}{d}})^h]^l$ and so $[(\lambda^{\frac{n}{d}})^h(\lambda^{\frac{n}{d}})^{-1}]^l=1.$ Since $\langle \lambda^{\frac{n}{d}}\rangle\unlhd G$ and $|\langle \lambda^{\frac{n}{d}}\rangle|=d$, we have that the order of $(\lambda^{\frac{n}{d}})^h(\lambda^{\frac{n}{d}})^{-1}$ divides $d.$ Notice that $(l,d)=1.$ Therefore $(\lambda^{\frac{n}{d}})^h=\lambda^{\frac{n}{d}}$ and so $h\in I_G(\lambda^{\frac{n}{d}})$, as claimed. 

Let $O_{d1}, O_{d1},..., O_{dr_d}$ be all $G$-orbits contained in $A_d$ and $\lambda_{di}$ be a representative in $O_{di}.$ We write $T_{d}=I_G(\lambda^{\frac{n}{d}})$ and $T_{di}=I_G(\lambda_{di}).$ By the above claim, we have that $T_d=T_{di}$ for all $1\leq i\leq r_d.$ Notice that $\lambda_{di}$ is linear and $T_{di}$ splits over $A$. Then $\lambda_{di}$ extends to $T_{di}$ and by Gallagher's theorem \cite[Corollary 6.17]{Isaacs1976} and the Clifford correspondence \cite[Theorem 6.11]{Isaacs1976}, there is a bijection from ${\rm Irr}(T_{di}/A)\to {\rm Irr}(G|\lambda_{di})$ given by $\phi\mapsto (\beta_{di}\phi)^G,$ where $\beta_{di}\in {\rm Irr}(T_{di})$ and $(\beta_{di})_A=\lambda_{di}.$ Next we show that ${\rm cod}(\beta_{di}\phi)^G\leq {\rm cod}\lambda_{di}\cdot{\rm cod}\phi=d\cdot{\rm cod}\phi.$ The last equality holds because ${\rm cod}\lambda_{di}=o(\lambda_{di})$ and $\lambda_{di}\in A_d.$ Since ${\rm ker}\lambda_{di}\ {\rm char}\ A\unlhd G,$ we have that ${\rm ker}\lambda_{di}\unlhd G$ and so ${\rm ker}\lambda_{di}\leq {\rm ker}(\beta_{di}\phi)^G.$ Let $h\in {\rm ker}\phi \cap T_{di}\cap H$ and $g=ya\in G$ with $y\in H$ and $a\in A.$ As $H$ is cyclic, we have that $h^g=h^a.$ Notice that $\beta_{di}\phi(h^a)=\beta_{di}\phi(a^{-1}ha)=\beta_{di}\phi(h)=\beta_{di}(h)\phi(h)=\lambda_{di}(1)\phi(1)=1.$ Hence, $h^g\in {\rm ker}\beta_{di}\phi$ for all $g\in G.$ It follows that $h\in {\rm ker}(\beta_{di}\phi)^G=\cap_{g\in G}({\rm ker}\beta_{di}\phi)^g$ and thus ${\rm ker}(\beta_{di}\phi)^G\geq {\rm ker}\lambda_{di}\cdot({\rm ker}\phi \cap T_d\cap H).$ Hence, 
\begin{eqnarray*}
{\rm cod}(\beta_{di}\phi)^G&=&\frac{|G:T_{di}||T_{di}|}{|G:T_{di}|\lambda_{di}(1)\phi(1)|{\rm ker}(\beta_{di}\phi)^G|}\\
&\leq&\frac{|A||T_{di}\cap H|}{\lambda_{di}(1)\phi(1)|{\rm ker}\lambda_{di}||({\rm ker}\phi \cap T_{di}\cap H)|}\\
&=& {\rm cod}\lambda_{di}\cdot{\rm cod}\phi\\
&=& d\cdot{\rm cod}\phi.
\end{eqnarray*}
Note that by \cite[Theorem 13.1]{Isaacs1976} we have that $|{\rm Irr}_H(A)|=|{\rm Irr}(C_A(H))|=1,$ where ${\rm Irr}_H(A)=\{\chi\in {\rm Irr}(A)| \chi^s=\chi\ {\rm for\ all}\ s\in H \}.$ This implies that $I_G(\lambda)<G$ for $1_A\not= \lambda\in A.$ Let $p$ and $q$ be the smallest prime divisors of $n$ and $m,$ respectively. Hence for $d\not=1,$ $|O_{di}|=|G:T_{di}|\geq q$ and $S_c(T_d/A)\leq \frac{1}{S_c(C_q)}S_c(G/A)=\frac{1}{q(q-1)+1}S_c(H).$ On the other hand, obviously $p(p-1)+1=S_c(C_p)\leq S_c(A).$ Notice that ${\rm Irr}(G)$ the disjoint union of the sets ${\rm Irr}(G|\lambda_{di})$ for $d|n$ and $1\leq i\leq r_d.$ Together with the facts that $T_{di}=T_d$ and $r_d=\frac{|A_d|}{|O_{di}|}\leq \frac{\phi(d)}{q},$ we have 
\begin{eqnarray*}
S_{c}(G)&=&\sum_{\chi\in {\rm Irr}(G)}{\rm cod}(\chi)\\
&=&\sum_{d|n}\sum_{i=1}^{r_d}\sum_{\phi\in{\rm Irr}(T_{di}/A)}{\rm cod}(\beta_{di}\phi)^G\\
&\leq&\sum_{d|n}\sum_{i=1}^{r_d}\sum_{\phi\in{\rm Irr}(T_{d}/A)}d\cdot{\rm cod}\phi\\
&=&\sum_{d|n} d\cdot r_d\cdot S_c(T_d/A)\\
&\leq& S_c(H)+
\frac{1}{q(q-1)+1}\sum_{1<d|n} d\cdot \psi(d)\cdot S_c(H)\\
&\leq& \frac{1}{S_c(A)}S_c(A)\cdot S_c(H)+\frac{1}{q}\frac{1}{q(q-1)+1}(S_c(A)-1)\cdot S_c(H)\\
&<&(\frac{1}{p(p-1)+1}+\frac{1}{q}\frac{1}{q(q-1)+1}) S_c(A)\cdot S_c(H)\\
&\leq& (\frac{1}{3}+\frac{1}{3}\frac{1}{7})S_c(A)\cdot S_c(H)=\frac{8}{21}S_c(A)\cdot S_c(H).
\end{eqnarray*} 
The last inequality follows while taking $p=2$ and $q=3.$ Now the proof is complete.
\end{proof}

We do not know if this is the best bound for groups $G$ as described in Theorem \ref{thm1.2}. Notice that $S_c(S_3)=6$ and $S_c(C_2\times C_3)=21.$ Thus it is reasonable to guess that $S_c(G)\leq \frac{2}{7}S_c(C_{nm})$, with equality if and only if $n=2,m=3$ and $G=S_3$ is the symmetric group of order $6.$

We now prove that if $p$ is the largest prime divisor of the order of $G$ and $G$ has a Sylow $p$-subgroup that is either not normal or nonabelian, then $G$ satisfy the bound. Thus, for groups that do not satisfy the bound, the Sylow subgroup for the largest prime divisor must be normal and abelian. Below we provide the proof of Theorem \ref{thm1.3}.

\begin{proof}[Proof of Theorem \ref{thm1.3}]  Let $n=p_{1}^{s_{1}}p_{2}^{s_{2}}\cdots p_{t}^{s_{t}}$ with $t>1$ and $p_{1}<p_{2}<\cdots<p_{t}=p.$ Assume the theorem is not true. Then $$\frac{S_c(G)}{k(G)}>\frac{S_c(C_n)}{k(G)}\geq \prod_{i=1}^{t}\frac{p_i}{p_i+1}\frac{n}{k(G)}n >\frac{p_1}{p+1}\frac{n}{k(G)}n.$$
If $P$ is not normal in $G$, the by Lemma \ref{lem2.4}(2), we have that $\frac{n}{k(G)}\geq p.$ If $P$ is nonabelian, it follows from Lemma \ref{lem2.3} and  \ref{lem2.4}(1) that $\frac{n}{k(G)}\geq \frac{|P|}{k(P)}>\frac{p^2}{p+1}.$ Hence, $\frac{n}{k(G)}>\frac{p^2}{p+1}$ in both cases. Notice that $\frac{p_1}{p+1}\frac{p^2}{p+1}\geq 2\frac{3^2}{4^2}=\frac{9}{8}>1$ as $p>p_1\geq 2.$ Therefore $\frac{S_c(G)}{k(G)}>n,$ contrary to the fact that ${\rm cod}\chi\leq n$ for all $\chi\in{\rm Irr}(G).$ 
\end{proof}

A similar proof works for the second largest primes dividing the order of $G,$ but it seems redundant to include it. In addition, from the above proof, we can see that the number of prime divisors of the order of $G$ has an important impact on the structure of Sylow subgroups. Hence, it might be interesting to determine how many prime divisors $G$ has to have normal and abelian Sylow subgroups in examples that do not satisfy the bound.

We now turn to determining how many primes dividing the order of the group force the group to satisfy the bound. Applying Theorem \ref{thm1.3}, we first give the following proposition. 

\begin{proposition}\label{prop3.2} Let $G$ be a group of order $n=p_{1}^{s_{1}}p_{2}^{s_{2}}\cdots p_{t}^{s_{t}}$ with $p_{1}<p_{2}<\cdots<p_{t}.$ If $\prod_{i=1}^{t}\frac{p_i}{p_i+1}\geq \frac{7}{48},$ then $S_c(G)\leq S_c(C_n).$
\end{proposition}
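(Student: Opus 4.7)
The plan is strong induction on $|G|$, using Theorem \ref{thm1.3} as the key reduction. If $G$ is abelian, the inequality is the Amiri--Jafarian Amiri--Isaacs theorem cited in the introduction; if $G$ is nilpotent but non-abelian, it is Theorem \ref{thm1.1}. So I assume $G$ is non-nilpotent. By Theorem \ref{thm1.3} I may assume that the Sylow $p_t$-subgroup $P$ is normal and abelian in $G$. Since $P$ is a normal Hall subgroup, Schur--Zassenhaus gives a complement $K$ with $G = P\rtimes K$ and $|K| = m = n/|P|$. The prime set $\pi(K) = \{p_1,\ldots,p_{t-1}\}$ still satisfies the hypothesis---removing the factor $p_t/(p_t+1)\leq 1$ from the product only enlarges it, so $\prod_{i=1}^{t-1} p_i/(p_i+1)\geq 7/48$---and consequently, by the inductive hypothesis, $S_c(K)\leq S_c(C_m)$. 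Moreover, since $P$ is abelian, $S_c(P)\leq S_c(C_{|P|})$ by the Amiri et al.\ result.

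The main step is to bound $S_c(G)$ via Clifford theory, following the proof of Theorem \ref{thm1.2}. For each $K$-orbit $O$ on ${\rm Irr}(P)$, choose a representative $\lambda_O$; since $(|P|,|I_K(\lambda_O)|)=1$, the linear character $\lambda_O$ extends canonically to $\beta_O$ on $I_G(\lambda_O) = P\rtimes I_K(\lambda_O)$, and the characters of $G$ above $O$ are $\{(\beta_O\phi)^G : \phi\in{\rm Irr}(I_K(\lambda_O))\}$. Using that $I_K(\lambda_O)$ normalizes $\ker\lambda_O$ (so that $\ker\lambda_O\unlhd I_G(\lambda_O)$), the codegree computation from Theorem \ref{thm1.2} should yield ${\rm cod}\,(\beta_O\phi)^G \leq {\rm cod}(\lambda_O)\cdot{\rm cod}(\phi)$; summing gives
\begin{equation*}
S_c(G) \leq \sum_O {\rm cod}(\lambda_O)\cdot S_c(I_K(\lambda_O)),
\end{equation*}
with the trivial orbit $\{1_P\}$ contributing exactly $S_c(K)$ and every non-trivial orbit having size at least the smallest prime dividing $|K|$.

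The concluding step is to show that this sum is at most $S_c(C_{|P|})\cdot S_c(C_m) = S_c(C_n)$, and this is where the hypothesis $\prod_{i=1}^t p_i/(p_i+1)\geq 7/48$ must be invoked substantively: the orbit-size lower bound, together with the inductive bound on $S_c(I_K(\lambda_O))$ and the Amiri bound on $S_c(P)$, has to be balanced against the explicit lower bound $\prod p_i/(p_i+1)\cdot n^2$ for $S_c(C_n)$ from Lemma \ref{lem2.2}. The main obstacles I expect are (i) extending the codegree bound ${\rm cod}\,(\beta\phi)^G\leq {\rm cod}(\lambda)\cdot{\rm cod}(\phi)$ beyond the cyclic-$P$ case of Theorem \ref{thm1.2}, where $\ker\lambda$ is automatically characteristic in $P$ but need not be when $P$ is merely abelian; and (ii) carrying out the numerical bookkeeping that actually exploits the threshold $7/48$. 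That the hypothesis must be used in a non-trivial way---not merely inherited by $K$ in the induction---is forced by Theorem \ref{thm1.5}: no generic inequality such as $S_c(G)\leq S_c(P)\cdot S_c(K)$ can close the argument unconditionally, so the threshold $7/48$ has to be consumed somewhere in the final estimate.
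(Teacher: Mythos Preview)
Your reduction to the case where the top Sylow subgroup $P$ is normal and abelian (via Theorems \ref{thm1.1} and \ref{thm1.3}) matches the paper, but from there the paper takes a completely different and much shorter route that avoids Clifford theory altogether. It never analyzes $K$-orbits on ${\rm Irr}(P)$. Instead it splits
\[
S_c(G)=S_c(G/G')+\sum_{\chi(1)>1}{\rm cod}(\chi),
\]
and bounds the two pieces separately. Writing $d=\min\{\chi(1):\chi(1)>1\}$, Ito's theorem gives $d\mid |G:P|$, so $d\geq p_1\geq 2$. For the nonlinear part one uses the elementary counting inequality $k(G)-|G:G'|\leq |G|/d^{2}$ together with ${\rm cod}(\chi)\leq |G|/d$, giving $\sum_{\chi(1)>1}{\rm cod}(\chi)\leq n^{2}/d^{3}\leq n^{2}/8$; Lemma \ref{lem2.2}(2) converts this into $\leq \tfrac{1}{8}\bigl(\prod_i p_i/(p_i+1)\bigr)^{-1}S_c(C_n)\leq \tfrac{6}{7}S_c(C_n)$. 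For the linear part, after disposing of the case $|G'|=p_1$ (which forces $G=P\times H$ and feeds into induction), one has $|G'|\geq 3$, so $S_c(G/G')\leq S_c(C_n)/S_c(C_{|G'|})\leq \tfrac{1}{7}S_c(C_n)$. Adding gives $S_c(G)\leq S_c(C_n)$, and the constant $7/48$ arises precisely from $\tfrac{1}{7}+\tfrac{1}{8}\cdot\tfrac{48}{7}=1$.

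Your plan, by contrast, runs into the very obstacle you flag in (i), and I do not see how to close it. The proof of Theorem \ref{thm1.2} uses that $\ker\lambda$ is \emph{characteristic} in the cyclic group $A$, hence normal in $G$, to get $\ker\lambda\leq\ker(\beta\phi)^G$; for a general abelian $P$ this fails, and without it the inequality ${\rm cod}\,(\beta\phi)^G\leq{\rm cod}(\lambda)\cdot{\rm cod}(\phi)$ has no evident substitute. A second, independent difficulty is that even granting that inequality, you would need to bound $S_c(I_K(\lambda_O))$ in terms of $S_c(K)$ or $S_c(C_m)$; in Theorem \ref{thm1.2} this works because $K$ is cyclic, so $S_c(I_K(\lambda))\leq S_c(K)/S_c(C_q)$ for a prime $q$ dividing $|K:I_K(\lambda)|$, but no such monotonicity holds for subgroups of a general $K$. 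The paper's degree-counting argument sidesteps both issues.
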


\begin{proof} There is nothing to show if $G$ is abelian. Suppose that $G$ is nonabelian. Let $P$ be a Sylow $p_t$-subgroup of $G$. Then by Theorem \ref{thm1.1} and \ref{thm1.3}, we only need to consider the case when $t>1$ and $P$ is abelian and normal in $G.$ The Schur–Zassenhaus theorem indicates that $G$ can be written as $G=P\rtimes H$ for some subgroup $H$ of $G.$ Let $d=\min\{\chi(1)|\chi(1)>1\}.$ Then $d\mid |G:P|$ by \cite[Theorem 6.15]{Isaacs1976} and so $(d,p_t)=1.$ Clearly, $d\geq p_1.$ If $|G'|=p_1,$ then $G'\leq Z(G)$ and thus $\chi(1)^2=|G:Z(G)|$ for all nonlinear characters $\chi\in{\rm Irr}(G).$ This yields $d^2=|G:Z(G)|$ and $p_t\nmid|G:Z(G)|.$ It follows that $P\leq Z(G)$ and hence $G=P\times H.$ By induction and Lemma \ref{lem2.1}, $S_c(G)=S_c(P)\cdot S_c(H)\leq S_c(C_n).$ 

We now assume that $m=|G'|>p_1.$ It is obvious that $m\geq 3$ and $S_c(G/G')\leq \frac{1}{S_c(C_m)}S_c(C_n)\leq \frac{1}{7}S_c(C_n).$
Notice that $|G|=|G:G'|+\sum_{\chi(1)>1}\chi(1)^2\geq |G:G'|+(k(G)-|G:G'|)\cdot d^2.$ Then $k(G)-|G:G'|\leq \frac{|G|}{d^2}.$ Since ${\rm cod}\chi\leq \frac{|G|}{d}$ for all nonlinear characters $\chi\in{\rm Irr}(G),$ we have
\begin{eqnarray*}
S_{c}(G)&=&S_c(G/G')+\sum_{\chi\in {\rm Irr}(G)-{\rm Lin}(G)}{\rm cod}(\chi)\\
&\leq& \frac{1}{7}S_c(C_n) + \frac{n^2}{d^3}\\
&<& \frac{1}{7}S_c(C_n) +\frac{1}{8}\frac{1}{\prod_{i=1}^{t}\frac{p_i}{p_i+1}} S_c(C_n)\\
&\leq&(\frac{1}{7}+\frac{1}{8}\frac{48}{7})S_c(C_n)=S_c(C_n).
\end{eqnarray*} 
The third inequality follows from Lemma \ref{lem2.2}(2). The proof is complete. 
\end{proof}

The above theorem implies that the number $\prod_{i=1}^{t}\frac{p_i}{p_i+1}$ has an important effect on the codegree sum, and $\prod_{i=1}^{t}\frac{p_i}{p_i+1}\to 0$ as $t\mapsto \infty.$ If $p_{1},p_{2},...,p_{t}$ are the first $t$ prime numbers, then $\prod_{i=1}^{t}\frac{p_i}{p_i+1}\geq \frac{7}{48}$ for $t\leq99,$ which hence gives the following corollary.  

\begin{corollary}\label{cor2.7} Let $G$ be a group of order $n$ with $|\pi(G)|\leq 99.$ Then $S_c(G)\leq S_c(C_{n}).$
\end{corollary}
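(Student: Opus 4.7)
The plan is to reduce the corollary directly to Proposition \ref{prop3.2}: I want to show that the hypothesis $|\pi(G)| \leq 99$ forces $\prod_{i=1}^{t} \frac{p_i}{p_i+1} \geq \frac{7}{48}$, so that the proposition applies verbatim.

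The key observation is monotonicity. The map $x \mapsto \frac{x}{x+1}$ is strictly increasing on $(0,\infty)$ and takes values in $(0,1)$. From this, two comparison principles follow. First, among all $t$-tuples of distinct primes, the product $\prod \frac{p_i}{p_i+1}$ is minimized when the primes are the $t$ smallest primes $q_1 = 2, q_2 = 3, \ldots, q_t$. Second, since every factor is less than $1$, adjoining further primes only decreases the product. Combining these, for any $t \leq 99$ and any primes $p_1 < p_2 < \cdots < p_t$,
\[
\prod_{i=1}^{t} \frac{p_i}{p_i+1} \;\geq\; \prod_{i=1}^{t} \frac{q_i}{q_i+1} \;\geq\; \prod_{i=1}^{99} \frac{q_i}{q_i+1},
\]
where $q_i$ denotes the $i$-th prime. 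So the corollary reduces to a single numerical inequality: $\prod_{i=1}^{99} \frac{q_i}{q_i+1} \geq \frac{7}{48}$, where the product runs over the first $99$ primes (terminating at $q_{99} = 523$).

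This verification is the only technical step, and I would confirm it by a direct finite computation (for instance with a short computer algebra script, or by grouping factors and estimating tails via the well-known $\prod_{p \leq N}(1 - 1/(p+1))$ type bounds derived from Mertens' theorems). Once the inequality is in hand, Proposition \ref{prop3.2} gives $S_c(G) \leq S_c(C_n)$ immediately. The main obstacle, such as it is, is purely bookkeeping: one must check the cutoff is sharp for this strategy, since including the $100$-th prime $q_{100} = 541$ pushes the product below $\frac{7}{48}$ and the argument breaks. Pushing the theorem beyond $99$ primes would therefore require sharpening Proposition \ref{prop3.2} itself, not the present reduction.
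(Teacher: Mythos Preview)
Your proposal is correct and follows essentially the same route as the paper: reduce to Proposition~\ref{prop3.2} by observing that the product $\prod_{i=1}^{t}\frac{p_i}{p_i+1}$ is minimized when the $p_i$ are the first $t$ primes, and then verify numerically that this minimum stays at least $\frac{7}{48}$ for $t\leq 99$. Your explicit justification of the monotonicity step and the remark that the bound fails at $t=100$ add useful detail, but the argument is the same.
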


Since the proof of Proposition \ref{prop3.2} provides a rough estimation, it is probable that 99 is not the optimal estimate for the value of $|\pi(G)|$ such that $S_c(G)\leq S_c(C_{n}).$ Therefore, it might be interesting to determine the largest number of $t$ so that $S_c(G)\leq S_c(C_{n})$ for any group $G$ satisfying $|\pi(G)|\leq t.$

Next we consider the semidirect product $G=A\rtimes S$ of a subgroup $A$ and a subgroup $S.$ However, additional hypothesis are needed for the group $G$ to satisfy the bound. Before presenting the result and its proof, we introduce the following fact {\rm (see \cite[Theorem 9]{Guralnic2006})}: $\frac{k(G)}{|G|}\leq \sqrt\frac{1}{|G:{\rm sol}(G)|}$ with equality if and only if $G$ is abelian, where ${\rm sol}(G)$ is the solvable radical of $G$.

\begin{proposition}\label{prop2.8} Let $G=A\rtimes S$ be the semidirect product of a subgroup $A$ and a subgroup $S$ with ${\rm sol}(S)=1.$ Suppose that $(|A|,|S|)=1$ and $|\pi(A)|\leq|\pi(S)|$ and $p>q$ for any primes $p\in \pi(A)$ and $q\in\pi(S).$ Then $S_{c}(G)\leq S_c(C_{n}),$ where $n=|G|.$
\end{proposition}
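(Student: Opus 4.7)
The plan is to combine the Guralnick--Robinson inequality $k(G)/|G|\le\sqrt{1/|G:{\rm sol}(G)|}$ recalled just above the statement with the trivial bound ${\rm cod}(\chi)\le |G|$ and the lower estimate from Lemma~\ref{lem2.2}(2). First I would note that the hypothesis ${\rm sol}(S)=1$ propagates upward: since $G/A\cong S$, any solvable normal subgroup of $G$ has solvable (hence trivial) image in $S$, so ${\rm sol}(G)\le A$ and therefore $|G:{\rm sol}(G)|\ge|S|$. Guralnick--Robinson then gives $k(G)\le n/\sqrt{|S|}$, and bounding each codegree by $n$ yields $S_c(G)\le k(G)\cdot n\le n^2/\sqrt{|S|}$. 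Comparing this with $S_c(C_n)>\prod_{p\mid n}\frac{p}{p+1}\cdot n^2$ from Lemma~\ref{lem2.2}(2), it suffices to prove
$$\sqrt{|S|}\ge \prod_{p\mid n}\frac{p+1}{p}.$$

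Because $(|A|,|S|)=1$, the product above splits into a contribution from $\pi(A)$ and one from $\pi(S)$, and the function $f(p)=(p+1)/p$ is strictly decreasing. Since every prime of $A$ exceeds every prime of $S$ and $|\pi(A)|\le|\pi(S)|$, listing each set in increasing order as $p_1<\cdots<p_a$ and $q_1<\cdots<q_s$ forces $p_i>q_s\ge q_i$ and hence $f(p_i)<f(q_i)$ for every $i\le a$. Consequently
$$\prod_{p\in\pi(A)}f(p)\le \prod_{q\in\pi(S)}f(q),\qquad \prod_{p\mid n}f(p)\le \Bigl(\prod_{q\in\pi(S)}f(q)\Bigr)^2,$$
so the problem reduces to verifying $|S|\ge\prod_{q\in\pi(S)}(1+1/q)^4$.

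The hard step will be this final numerical inequality, and the hypothesis ${\rm sol}(S)=1$ is essential to it. By Burnside's $p^aq^b$ theorem ${\rm sol}(S)=1$ forces $|\pi(S)|\ge 3$, and since the smallest nonsolvable group is $A_5$ one has $|S|\ge 60$. If $|\pi(S)|=3$, the right-hand side is maximized at $\{2,3,5\}$, where it equals $(12/5)^4=20736/625<34\le|S|$. If $|\pi(S)|\ge 4$, the crude divisibility bound $|S|\ge\prod_{q\in\pi(S)}q$ already suffices: the required inequality becomes $\prod_{q\in\pi(S)}q^5\ge\prod_{q\in\pi(S)}(q+1)^4$, and while the factor $q^5/(q+1)^4$ is less than $1$ at $q=2,3$, the factors at $q\ge 5$ more than compensate; the extremal case $\{2,3,5,7\}$ gives $\prod q^5/\prod(q+1)^4\approx 3.71$, and enlarging the prime set only increases this ratio. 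Everything else in the argument is a mechanical concatenation of the ingredients above.
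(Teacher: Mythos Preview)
Your proof is correct and follows essentially the same strategy as the paper's: bound $k(G)$ by $n/\sqrt{|S|}$ via Guralnick--Robinson, combine with $S_c(G)\le k(G)\cdot n$ and Lemma~\ref{lem2.2}(2), and reduce to a numerical inequality on the primes dividing $n$. The only differences are tactical---you apply Guralnick--Robinson directly to $G$ (after noting ${\rm sol}(G)\le A$) rather than to $S$ via Lemma~\ref{lem2.4}(1), and you handle the final numerical step by a case split on $|\pi(S)|$ using $|S|\ge 60$, whereas the paper argues prime by prime, showing $\frac{q_i}{q_i+1}\,q_i^{r_i/2}\ge\frac{q_i+2}{q_i+1}$ and then pairing each $p_i\in\pi(A)$ with some $q_i\in\pi(S)$.
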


\begin{proof} Suppose that $|A|=\prod_{i=1}^{t}p_i^{s_i}$ and $|S|=\prod_{i=1}^{l}q_i^{r_i}.$ Then by the above fact and Lemma \ref{lem2.4}(1), we have $\frac{k(G)}{|G|}\leq \frac{k(S)}{|S|}\leq \prod_{i=1}^{l}q_i^{-\frac{r_i}{2}}.$ Assume that the proposition is false. Then by Lemma \ref{lem2.2}(2), we have $$\frac{S_c(G)}{k(G)}>\frac{S_c(C_{n})}{k(G)}>\prod_{i=1}^{t}\frac{p_i}{p_i+1}\prod_{i=1}^{l}\frac{q_i}{q_i+1}\frac{|G|}{k(G)}|G|.$$
We claim that $\frac{q_i}{q_i+1}q_i^{\frac{r_i}{2}}\geq 1+\frac{1}{q_i+1}$ for $1\leq i\leq l.$ If $q_i=2$ for some $i,$ then $r_i\geq 2,$ otherwise $S$ has a normal subgroup of odd order, contrary to ${\rm sol}(S)=1.$ Thus $\frac{q_i}{q_i+1}q_i^{\frac{r_i}{2}}\geq \frac{2}{3}\cdot2=1+\frac{1}{2+1},$ as claimed. If $q_i>2,$ then $q_i\cdot q_i^{\frac{r_i}{2}}-q_i-2\geq q_i^{\frac{3}{2}}-q_i-2\geq 0.$ Hence, the claim holds. Since $t\leq l$ and $p_i\geq q_i+1$ for all $1\leq i\leq t,$ we have that  $\frac{p_i}{p_i+1}\frac{q_i+2}{q_i+1}\geq \frac{q_i+1}{(q_i+1)+1}\frac{q_i+2}{q_i+1}=1.$ It follows that $$\frac{S_c(G)}{k(G)}>\prod_{i=t+1}^{l}(1+\frac{1}{q_i+1})|G|\geq |G|.$$
This is a contradiction. 
\end{proof}

\section{A family of examples to Question A.}

In this section, we begin with some well-known concepts and results. Let $\mathbb{P}$ be the set of all primes. We write $\zeta(s)$ to denote the Riemann zeta function, which is defined as $\zeta(s)=\sum_{n=1}^\infty\frac{1}{n^s}$ for Re$(s)>1.$ The following are some basic properties of several Euler products attached to $\zeta(s).$
Theorem 280 in \cite{Hardy2008} shows that if $s>1,$ then $\zeta(s)$ is convergent and $$\zeta(s)=\prod_{p\in\mathbb{P}}\frac{1}{1-p^{-s}}.$$ 
It follows immediately from $\prod_{p\in\mathbb{P}}(1+\frac{1}{p^{s}})\cdot \prod_{p\in\mathbb{P}}(1-\frac{1}{p^{s}})=\prod_{p\in\mathbb{P}}(1-p^{-2s})$ that

$$\prod_{p\in\mathbb{P}}(1+\frac{1}{p^{s}})=\frac{\zeta(s)}{\zeta(2s)}.$$ 

The construction of examples also relies on the Dirichlet’s theorem on arithmetic progressions (see \cite[Theorem 15]{Hardy2008}), which states that: If $a$ and $b$ are positive integers and $(a,b)=1$, then there are infinitely many primes in the arithmetic progression $\{ak+ b|k\ {\rm is\ an\ integer}\}$ and the sum of the reciprocals of the prime numbers in the progression diverges. In other words, 
 $$\sum_{p\equiv b({\rm mod}\ a)} \frac{1}{p}=\infty.$$

In addition, to demonstrate the existence of the group $G$ we construct, we need a short fact: A semidirect product $C_p^n\rtimes C_q$ exists for distinct primes $p$ and $q$ if and only if $q|p^m-1$ for some $m\leq n.$ Now we have all components to construct examples. To enhance readability, we present the family of examples as follows. 

\begin{theorem}\label{thm4}
Let $G=A\rtimes P$ be a semidirect product of $A$ and $P$, where $P\cong C_3$ and $A\cong C_{p_{1}}^{2}\times C_{p_{2}}^{2}\times \cdot\cdot\cdot \times C_{p_{t}}^{2}$ and $p_1, p_2,...,p_t$ are the first $t$ prime numbers congruent to $2$ modulo $3$. Suppose that $C_A(P)=1.$
Then $$\frac{S_c(G)}{S_c(C_{n})}\to \infty\ \ \ {\rm as}\ \ \ t \to \infty,$$ 
where $n$ is the order of $G.$
\end{theorem}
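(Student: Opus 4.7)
My plan is to parametrize ${\rm Irr}(G)$ via Clifford theory, compute $S_c(G)$ explicitly, compare it to $S_c(C_n)$ using Lemma~\ref{lem2.2}(2), and drive the ratio to infinity with the Dirichlet divergence quoted earlier. The first observation is that each Sylow subgroup $C_{p_i}^2$ is characteristic in $A$, hence $P$-invariant, so $C_A(P)=1$ forces $P$ to act fixed-point-freely on each $C_{p_i}^2$. Because $p_i\equiv 2\pmod 3$ implies $\mathbb{F}_{p_i}$ contains no nontrivial cube root of unity, this fixed-point-freeness upgrades to $\mathbb{F}_{p_i}$-irreducibility of the representation $P\to GL_2(\mathbb{F}_{p_i})$: any invariant line would force an eigenvalue of order dividing $3$ in $\mathbb{F}_{p_i}^{\times}$, necessarily equal to $1$, producing a fixed point.

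Next, by \cite[Theorem 13.1]{Isaacs1976} (invoked earlier in the excerpt), $|{\rm Irr}_P(A)|=|{\rm Irr}(C_A(P))|=1$, so every nontrivial $\lambda\in{\rm Irr}(A)$ has $I_G(\lambda)=A$ and induces irreducibly to $\chi_\lambda:=\lambda^G$ of degree $3$. Writing $\lambda=\prod_i\lambda_i$ with $\lambda_i\in{\rm Irr}(C_{p_i}^2)$ and $S(\lambda):=\{i:\lambda_i\ne 1\}$, I would compute $\ker\chi_\lambda=\bigcap_{g\in P}(\ker\lambda)^g$. For each $i\in S(\lambda)$, the intersection $\bigcap_g(\ker\lambda_i)^g$ is a proper $P$-invariant subspace of $C_{p_i}^2$, hence trivial by the irreducibility above; for $i\notin S(\lambda)$ it is all of $C_{p_i}^2$. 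Therefore ${\rm cod}(\chi_\lambda)=\prod_{i\in S(\lambda)}p_i^2$.

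Gathering the three characters above $1_A$ (contributing $S_c(C_3)=7$) and grouping the rest into $P$-orbits of size $3$ (with $\prod_{i\in S}(p_i^2-1)$ nontrivial $\lambda$'s of support $S$) yields
$$S_c(G)=\frac{20}{3}+\frac{1}{3}\prod_{i=1}^t(p_i^4-p_i^2+1),\qquad S_c(C_n)=7\prod_{i=1}^t(p_i^4-p_i^3+p_i^2-p_i+1),$$
the latter from Lemma~\ref{lem2.2}(2). Since $(p_i^4-p_i^2+1)-(p_i^4-p_i^3+p_i^2-p_i+1)=p_i(p_i-1)^2$, the ratio satisfies
$$\frac{S_c(G)}{S_c(C_n)}\ge\frac{1}{21}\prod_{i=1}^t\Bigl(1+\frac{p_i(p_i-1)^2}{p_i^4-p_i^3+p_i^2-p_i+1}\Bigr)\ge\frac{1}{21}\prod_{i=1}^t\Bigl(1+\frac{1}{4p_i}\Bigr),$$
using $p_i^4-p_i^3+p_i^2-p_i+1\le p_i^4$ and $(p_i-1)^2\ge p_i^2/4$ for $p_i\ge 2$. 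Taking logarithms and using $\log(1+x)\ge x/2$ on $[0,1]$ reduces matters to the divergence of $\sum_i 1/p_i$, which follows from the quoted Dirichlet theorem since the $p_i$ enumerate the primes $\equiv 2\pmod 3$.

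The main obstacle is the kernel calculation in the second paragraph, specifically deducing $\bigcap_g(\ker\lambda_i)^g=0$. This is precisely where $p_i\equiv 2\pmod 3$ and $C_A(P)=1$ combine to provide the $\mathbb{F}_{p_i}$-irreducibility of the $P$-action; once that is in place, the rest is bookkeeping together with the already cited Dirichlet input.
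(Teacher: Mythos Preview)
Your proposal is correct and follows essentially the same route as the paper: Clifford theory over the Frobenius kernel $A$, the observation that $p_i\equiv 2\pmod 3$ together with $C_A(P)=1$ makes the $P$-action on each $C_{p_i}^2$ irreducible (so $\ker\lambda^G$ has trivial $p_i$-part whenever $p_i\mid o(\lambda)$), the resulting closed formula $S_c(G)=\tfrac{20}{3}+\tfrac{1}{3}\prod_i(p_i^4-p_i^2+1)$, and divergence via Dirichlet. The only cosmetic difference is the endgame: the paper bounds the ratio using the Euler-product identity $\prod_p(1+p^{-s})=\zeta(s)/\zeta(2s)$, whereas you use the elementary inequality $1+\tfrac{1}{4p_i}$, but both reduce to $\sum_{p\equiv 2(3)}1/p=\infty$.
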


\begin{proof}[Proof of Theorem \ref{thm4}] Notice that $p_i\equiv 2({\rm mod}\ 3)$ and so $3\mid p_i^2-1$ and $3\nmid p_i-1$ for $1\leq i\leq t.$ By the above fact, the group $C_{p_i}^2\rtimes P$ with $C_{C_{p_i}^2}(P)=1$ exists and hence such a group $G$ exists. In addition, it is easy to see that $G$ is a Frobenius group with kernel $A$. 
Let $G$ act on ${\rm Irr}(A)$ and $1_A=\lambda_0,\lambda_1,...,\lambda_r$ be the set of representatives for the $G$-orbits on ${\rm Irr}(A).$ Then we have that $I_G(\lambda_j)=A$ for $1\leq j\leq r.$ It follows that the size of each non trivial $G$-orbit is $|G:I_G(\lambda_j)|=|P|.$ Obviously, $\lambda_j^G\in {\rm Irr}(G)$ and $${\rm Irr}(G)={\rm Irr}(G/A)\bigcup(\bigcup_{j=1}^r\{\lambda_j^G\}).$$
Notice that if $\phi$ lies in the $G$-orbit containing $\lambda,$ then ${\rm cod}(\phi^G)={\rm cod}(\lambda^G).$ Therefore, 
\begin{eqnarray*}
S_{c}(G)&=&\sum_{\chi\in {\rm Irr}(G)}{\rm cod}(\chi)\\
&=& S_c(G/A)+\sum_{j=1}^r{\rm cod}(\lambda_j^G)\\
&=& S_c(P)+\frac{1}{|P|}\sum_{1_A\not=\lambda\in{\rm Irr}(A)}{\rm cod}(\lambda^G)\\
&=& S_c(P)+\frac{1}{|P|}\sum_{i=1}^t\sum_{|\pi(o(\lambda))|=i}{\rm cod}(\lambda^G).
\end{eqnarray*}

Notice that ${\rm cod}(\lambda^G)=\frac{|A|}{|{\rm ker}\lambda^G|}.$ To compute the codegree sum, we need the following facts. Since the order of $\lambda$ is $o(\lambda)=|A:{\rm ker}\lambda|$ and $\lambda\in {\rm Irr}(A)\cong A,$ it follows that $p_i^2\nmid o(\lambda)$ for any $i.$
If $p_i\mid o(\lambda),$ then $|{\rm ker}\lambda|_{p_i}=p_i.$ Since $C_A(P)=1$ and $3\nmid p_i-1,$ we have that $|{\rm ker}\lambda^G|_{p_i}=1$ and thus $p_i^2\mid{\rm cod}(\lambda^G).$ If $p_i\nmid o(\lambda),$ then $|{\rm ker}\lambda|_{p_i}=p_i^2$ and so $|{\rm ker}\lambda^G|_{p_i}=p_i^2$ and so $p_i\nmid{\rm cod}(\lambda^G).$ We here consider all irreducible characters $\lambda$ satisfying $|\pi(o(\lambda))|=t$ as an example.
Clearly, there are $\prod_{i=1}^t(p_i^2-1)$ such characters and ${\rm cod}(\lambda^G)=\prod_{i=1}^tp_i^2.$ It follows that $$\sum_{|\pi(o(\lambda))|=t}{\rm cod}(\lambda^G)=\prod_{i=1}^tp_i^2(p_i^2-1).$$
With the same idea, we have
\begin{eqnarray*}
S_{c}(G)&=&S_c(P)+\frac{1}{|P|}(\prod_{i=1}^t[p_i^2(p_i^2-1)+1]-1)\\
&=& 7-\frac{1}{3}+\frac{1}{3}\prod_{i=1}^t[p_i^2(p_i^2-1)+1]\\
&=& \frac{20}{3}+\frac{1}{3}\prod_{i=1}^t\frac{p_i^6+1}{p_i^2+1}.
\end{eqnarray*}

By Lemma \ref{lem2.2}(2), we have that $S_c(C_{n})=7\prod_{i=1}^t\frac{p_i^5+1}{p_i+1}.$ To complete our proof, we only need to show that $$r=\prod_{p\equiv 2({\rm mod}\ 3)}\frac{(p^6+1)(p+1)}{(p^2+1)(p^5+1)}=\infty.$$
Indeed, by applying the properties of the above Euler products and Dirichlet’s theorem, we have
\begin{eqnarray*}
\prod_{p\equiv 2({\rm mod}\ 3)}\frac{(p^6+1)(p+1)}{(p^5+1)(p^2+1)} &=& \prod_{p\equiv 2({\rm mod}\ 3)}\frac{(1+\frac{1}{p^6})(1+\frac{1}{p})}{(1+\frac{1}{p^5})(1+\frac{1}{p^2})}\\
 &\geq& \frac{\prod_{p\equiv 2({\rm mod}\ 3)}(1+\frac{1}{p})}{\prod_{p\in\mathbb{P}}(1+\frac{1}{p^5})\prod_{p\in\mathbb{P}}(1+\frac{1}{p^2})}\\
 &\geq&\frac{\sum_{p\equiv 2({\rm mod}\ 3)} \frac{1}{p}}{\frac{\zeta(5)}{\zeta(10)}\frac{\zeta(2)}{\zeta(4)}}=\infty. 
\end{eqnarray*}
\end{proof}

Theorem \ref{thm1.5} follows immediately. Unfortunately we are not able to determine the smallest number $t=|\pi(A)|$ of prime divisors of $A$. Actually, based on the proof of the above theorem, in order to show that $S_c(G)>S_c(C_n),$ it suffices to prove that $r\geq 21.$ We computed all primes less than $1$ billion satisfying $p\equiv 2({\rm mod}\ 3)$ and that the ratio $r$ is approximately to $4.$ We also noticed that  $$\sum_{p\leq m, p\equiv 2({\rm mod}\ 3)} \frac{1}{p}\approx \frac{\log(\log(m))}{2},$$ 
and that the sum diverges very very slowly. Hence, from a certain perspective, this value $t$ would be pretty large. Again, as we mentioned before, it might be interesting to determine the smallest possible number of prime divisors of a group that do not satisfy the bound.

Notice that the group $G$ described in the above theorem is solvable. We also are able to construct a non-solvable group $\Gamma$ such that $S_c(\Gamma)>S_c(C_{|\Gamma|}).$ Let $G$ have the same structure in Theorem \ref{thm1.5} but $2,5\not\in\pi(G).$ Consider the group $\Gamma=G\times Sz(8),$ where $Sz(8)$ is a Suzuki group of order $2^6\cdot5\cdot7\cdot13.$ Clearly, $(|G|,|Sz(8)|)=1.$ This yields that $S_c(\Gamma)=S_c(G)\cdot S_c(Sz(8))$ by Lemma \ref{lem2.1}. Theorem \ref{thm1.5} indicates that we can choose an integer $t$ such that $S_c(\Gamma)>S_c(C_{|\Gamma|}).$

On the other hand, it is clear that the group $G$ constructed in the above theorem has Fitting height $2.$ It is natural to ask: Is there any group $G$ having Fitting height $l$ for any positive integer $l\geq 2$ so that it does not satisfy the bound? Applying the above theorem, we are able to show that such groups exist. Indeed, we first choose a group $H$ of order $h$ so that it has Fitting height $l\geq 2$ and $(3,h)=1.$ (One way to obtain such a group is to take wreath product of cyclic groups, alternating between ones of order $p$ and ones of order $q,$ where $p$ and $q$ are two distinct primes.) Now let $G=A\rtimes P$ be of order $n$ as in Theorem \ref{thm4}. But those prime divisors of the order of $A$ can be carefully selected such that $(h,n)=1$ since $t$ can be arbitrarily large. Now consider the group $\Gamma=G\times H.$ It follows from $(h,n)=1$ that $S_c(\Gamma)=S_c(G)\cdot S_c(H)$ and the fitting length of $\Gamma$ is $l$. Hence $\frac{S_c(\Gamma)}{S_c(C_{nh})}=\frac{S_c(G)}{S_c(C_n)}\cdot\frac{S_c(H)}{S_c(C_h)}$ can be arbitrarily large as $t\mapsto\infty$ because $\frac{S_c(H)}{S_c(C_h)}$ is a fixed number. Hence, the group $\Gamma$ is as desired. 

A similar construction works while considering the derived length of a group. In other word, for any integer $d\geq 2,$   there exists a group with derived length of $d$ so that it does not satisfy the bound.

\end{document}